\documentclass[11pt]{article}

\usepackage[a4paper,margin=1in]{geometry}
\usepackage{amsmath,amssymb,amsthm,bm}
\usepackage{booktabs}
\usepackage{enumitem}
\usepackage{hyperref}
\usepackage{graphicx}
\usepackage{xcolor}
\usepackage{algorithm}
\numberwithin{equation}{section}

\newtheorem{theorem}{Theorem}[section]
\newtheorem{lemma}[theorem]{Lemma}
\newtheorem{proposition}[theorem]{Proposition}
\newtheorem{corollary}[theorem]{Corollary}
\newtheorem{assumption}[theorem]{Assumption}
\newtheorem{remark}[theorem]{Remark}

\newcommand{\D}{\mathbb D}
\newcommand{\C}{\mathbb C}
\newcommand{\R}{\mathbb R}
\newcommand{\dd}{\,d}
\newcommand{\del}{\partial}
\newcommand{\rank}{\operatorname{rank}}
\newcommand{\diag}{\operatorname{diag}}
\newcommand{\ord}{\operatorname{ord}}

\newcommand{\iu}{\mathrm{i}}
\newcommand{\e}{\mathrm{e}}

\title{A Hankel determinant zero-order principle for source counting in an inverse heat point-source problem}
\author{Zhiliang Deng\thanks{Corresponding author: School of Mathematical Science, University of Electronic Science and Technology of China, dengzhl@uestc.edu.cn}
\and Ailin Qian\thanks{Department of Mathematics and Statistics, Hubei  University of Science and Technology, junren751113@126.com}
\and Xiaomei Yang\thanks{School of Mathematics, Southwest Jiaotong University, yangxiaomath@swjtu.edu.cn}
}
\date{}

\begin{document}

\maketitle

\begin{abstract}
This paper studies the identification of an unknown number of stationary point sources in a two-dimensional heat equation from boundary flux measurements. Unlike many reconstruction approaches that assume the number of sources to be known in advance, we develop a determinant-based counting method that extracts this number directly from the measured data. In the unit disk, the Laplace-transformed and normalized boundary flux admits a Fourier moment representation whose low-frequency limit has a finite exponential-sum structure. This structure leads to a family of Hankel matrices and associated determinant characteristics. We prove that, under a generic determinant lifting condition, the vanishing order of the Hankel determinant at the zero Laplace frequency changes exactly when the Hankel order exceeds the true number of sources. Consequently, the source number is characterized by the first nonzero contour count of the determinant characteristic through the argument principle. We further establish a Rouch\'e-type stability result showing that the determinant zero count is preserved under sufficiently small perturbations induced by measurement noise, boundary discretization, and time truncation. After the source number is identified, the source locations and strengths are recovered from the low-frequency moment sequence by an annihilating-polynomial and Vandermonde reconstruction procedure. Numerical experiments confirm the predicted count pattern, demonstrate robustness with respect to contour selection, illustrate the role of the Rouch\'e margin under noise and near-degenerate configurations, and validate the subsequent recovery of source locations and strengths.
\end{abstract}

\section{Introduction}
\label{sec:introduction}

Inverse source problems arise in many scientific and engineering applications,
including medical imaging \cite{Baillet2001,Baratchart2005,He2018},
environmental monitoring \cite{Moghaddam2021}, groundwater contamination source
identification \cite{Badia2005,Kovalets2011,Shlomi2007}, and structural health
monitoring \cite{Gallet2022}. In such problems, one seeks to infer unknown
forcing terms from indirect observations of the corresponding physical field.
For parabolic equations, this task is particularly delicate because the heat
semigroup smooths the source information in both space and time. Consequently,
the recovery of source locations, intensities, temporal profiles, and the
number of active sources is typically ill-posed.

A substantial literature has studied uniqueness, stability, and reconstruction
for inverse source problems governed by parabolic and diffusion equations; see,
for example,
\cite{Bushuyev1995,Choulli1994,Isakov1990,Isakov1991,Kian2019,Reeve1994,Rundell1980,Solovev1989}
and the references therein. These works provide analytical foundations for
inverse source problems under different observation settings, source
structures, and a priori assumptions. Typical examples include inverse source
problems with separable or time-dependent source terms, as well as problems
with partial or sparse boundary observations; see, for instance,
\cite{Rundell2020}. Such results show that additional structural information on
the source is often essential for uniqueness and stable reconstruction in
parabolic inverse problems.

Numerical and statistical methods for parabolic inverse source problems have
also been developed from several perspectives. Optimization-based approaches
recover unknown source terms by minimizing data-misfit functionals, often
combined with regularization or optimal-control techniques; see, for example,
\cite{Hasanov2011}. Nguyen et al. \cite{Nguyen2019} proposed a
quasi-reversibility based numerical method for a parabolic inverse source
problem and applied it to a coefficient inverse problem. Lin et al.
\cite{Lin2022} developed a Bayesian sequential prediction method for a
semi-discrete parabolic source model with sparse boundary measurements, while
Lin et al. \cite{Lin2024} studied discontinuous source recovery using sparse
boundary flux data and dynamic sensors.

For heat equations with point sources, several recent works are more directly
related to the present study. Gong et al. \cite{Gong2026} considered the
recovery of one point source from flux data measured at several boundary
points. They proved unique recovery of the source location and a piecewise
constant-in-time amplitude in the unit ball, and also obtained uniqueness for a
compactly supported amplitude in two-dimensional simply connected smooth
bounded domains. Their proof combines eigenfunction representations,
heat- and Poisson-kernel estimates, and complex-analytic arguments. Gu et al.
\cite{Gu2025} studied the recovery of a Dirac point source from sparse boundary
measurements, established a uniqueness theorem, and proposed a least-squares
optimization method, solved by gradient descent, for reconstructing the source
location. For point-source identification with an unknown number of sources,
Deng et al. \cite{Deng2026} introduced a Bayesian thinning algorithm combining
a level set representation, pCN sampling, and a Poisson point process thinning
mechanism to infer the number, locations, and intensities of point sources from
boundary flux observations.

Motivated by these developments, we study a deterministic source-counting
problem for stationary point sources in the heat equation. Existing
sparse-measurement results for heat point sources mainly address one-source
uniqueness or fixed-dimensional location reconstruction, while Bayesian
thinning handles source-number uncertainty through a probabilistic sampling
mechanism. Here we seek a deterministic analytic principle for determining the
number of sources before carrying out location and strength reconstruction.

We work in the unit disk, where the Laplace-transformed and normalized
boundary flux admits an explicit Fourier moment representation. The key
observation is that the low-frequency boundary moment sequence has a finite
exponential-sum structure. This structure is reminiscent of classical
Prony-type methods and Hankel moment techniques for finite exponential sums
and sparse measure recovery
\cite{Castro2012,Potts2010,Potts2013,Potts2013_b}. However, the inverse heat
source problem considered here has an additional complex-frequency feature:
the moments depend analytically on the Laplace variable. This analytic
dependence allows us to determine the unknown source number from the zero
order of a Hankel determinant characteristic, rather than from the rank of a
static moment matrix.

%

More precisely, from the Fourier moments of the normalized boundary flux we
construct a family of Hankel matrices \(H_m(s)\) and define the determinant
characteristic
\[
\Delta_m(s)=\det H_m(s).
\]
At the zero Laplace frequency, \(H_m(0)\) inherits the finite-rank structure of
a finite exponential sum. We prove that, under a generic determinant lifting
condition, the vanishing order of \(\Delta_m(s)\) at \(s=0\) satisfies
\[
\operatorname{ord}_{s=0}\Delta_m(s)=0
\quad (m\leq N),
\qquad
\operatorname{ord}_{s=0}\Delta_m(s)=m-N
\quad (m>N).
\]
Thus the number of point sources is determined by the first Hankel order at
which this zero order becomes positive. The argument principle then turns this
characterization into a computable contour-counting formula.

We further analyze the stability of this determinant count. Rouch\'e's theorem
is used to show that the zero count is preserved under perturbations of the
determinant characteristic, provided the perturbation is smaller than the
determinant margin on the contour. We also trace how measurement noise,
boundary discretization, and finite-time truncation propagate from the boundary
flux data to the Fourier moments and then to the Hankel determinant. Once the
source number has been identified, the same low-frequency moment sequence is
used to recover the source locations and strengths through an
annihilating-polynomial and Vandermonde reconstruction procedure.

The numerical experiments confirm the predicted determinant count pattern,
show robustness with respect to the contour radius, and illustrate the role of
the Rouch\'e margin under noisy and near-degenerate configurations. They also
validate the subsequent recovery of source locations and strengths, while
showing that the strength reconstruction is more sensitive than the location
reconstruction due to the conditioning of the associated Vandermonde system.

The rest of the paper is organized as follows. Section~\ref{sec:for_bou}
formulates the inverse heat point-source problem in the unit disk and derives
the Laplace-transformed boundary flux representation for stationary-in-time
sources. Section~\ref{sec:hankel-counting} develops the Fourier moment
representation of the normalized boundary flux, constructs the Hankel
determinant characteristics, and establishes the source-counting formula based
on the vanishing order of \(\Delta_m(s)\) at \(s=0\). The same section also
gives the argument-principle interpretation of the determinant count and the
subsequent Prony--Vandermonde recovery of source locations and strengths.
Section~\ref{sec:noise-rouche-stability} analyzes discrete boundary data,
noise propagation, determinant perturbations, and the Rouch\'e-type stability
of the contour count. Section~\ref{sec:numerical} presents numerical
experiments validating the predicted count pattern, contour-radius robustness,
sensitivity to noise and near-degenerate source configurations, and the
recovery of source locations and strengths. Finally,
Section~\ref{sec:conclusion} concludes the paper and discusses possible
extensions.

\section{Problem Formulation}\label{sec:for_bou}
In this paper, we study the reconstruction of stationary-in-time point sources in a two-dimensional heat equation.  Let
$\D:=\{x\in\R^2:|x|<1\}$ be the unit disk. We consider the initial-boundary value problem:
\begin{align}
\label{eq:heat-source}
\left\{
\begin{aligned}
&\partial_t u(x,t)-\Delta u(x,t)=\sum_{j=1}^{N}q_j\delta(x-p_j), && (x,t)\in\D\times(0,\infty),\\
&u(x,t)=0, && (x,t)\in\partial\D\times(0,\infty),\\
&u(x,0)=0, && x\in\D.
\end{aligned}
\right.
\end{align}
Here $N\in\mathbb{N}^*$ is the unknown number of point sources, $p_j\in\D$ are the unknown source locations, $q_j\in\R\setminus\{0\}$ are the unknown source strengths, 
and $\delta(x-p_j)$ denotes the Dirac distribution centered at $p_j$.

Let $\theta_1, \ldots, \theta_L\in[0,2\pi)$ be a finite set of boundary observation angles, with corresponding boundary points $\e^{\iu\theta_\ell}$, $\ell=1, \ldots, L$. The available data are the boundary heat fluxes
\begin{align}
\label{eq:measurements}
f(\theta_\ell, t):=\frac{\partial u}{\partial\nu}(\e^{\iu\theta_\ell}, t), \quad t\in(0, T),
\quad \ell=1, \ldots, L,
\end{align}
where $\nu$ denotes the outward unit normal to $\partial\D$. The inverse source problem considered in this paper is to recover the number $N$, the locations $p_1, \ldots, p_N$ and the strengths $q_1, \ldots, q_N$ of point sources from the sparse boundary measurements $\{f(\theta_\ell, t)\}_{\ell=1}^{L}$.
This inverse problem is ill-posed in the sense that small perturbations in the measured data may lead to large errors in the reconstructed source configuration.


Applying the Laplace transform in time gives
\begin{align}
\label{eq:laplace-problem}
\left\{
\begin{aligned}
&s\widehat u(x, s)-\Delta\widehat u(x, s)
=
\frac1s\sum_{j=1}^{N}q_j\delta(x-p_j),
&&x\in\D,\\
&\widehat u(x, s)=0,
&&x\in\del\D.
\end{aligned}
\right.
\end{align}
Let \(G_s(x,p)\) denote the Dirichlet Green function of \(s-\Delta\) in \(\D\). Then the solution to \eqref{eq:laplace-problem} admits the form
\begin{align}
\label{eq:green-rep}
\widehat u(x, s)
=
\frac1s
\sum_{j=1}^{N}q_jG_s(x,p_j).
\end{align}
Consequently,  the Laplace-transformed boundary flux can be expressed as
\begin{align}
\label{eq:flux-laplace}
\widehat f(\theta,s)=\frac1s \sum_{j=1}^{N}q_j \frac{\del G_s}{\del\nu}(\e^{\iu\theta},p_j).
\end{align}
We remove the known factor \(1/s\) and define the normalized boundary flux:
\begin{align}
\label{eq:normalized-flux}
\mathcal F(\theta,s):=s\widehat f(\theta,s)=
\sum_{j=1}^{N}q_j\frac{\del G_s}{\del\nu}(\e^{\iu\theta}, p_j).
\end{align}

\begin{remark}
The complex variable in this paper is the Laplace variable \(s\). The modified Helmholtz Green function depends on \(\sqrt{s}\), but the zero-counting argument is formulated in the \(s\)-plane.
\end{remark}

Let
$$
p_j=\rho_j \e^{\iu\phi_j},
\qquad
0\leq\rho_j<1.
$$
For the unit disk, the boundary normal derivative of the Dirichlet Green function for \(s-\Delta\) has the Fourier--Bessel representation
\begin{align}
\label{eq:normal-green-fourier}
\frac{\del G_s}{\del\nu}(\e^{\iu\theta},p_j)=
-\frac1{2\pi}
\sum_{n=-\infty}^{\infty}
\frac{I_{|n|}(\sqrt{s}\rho_j)}{I_{|n|}(\sqrt{s})}
\e^{\iu n(\theta-\phi_j)},
\end{align}
where \(I_n\) is the modified Bessel function of the first kind.

\section{Hankel Determinant Characteristics and Source Counting}
\label{sec:hankel-counting}

In this section, we construct a family of Hankel determinant characteristics from the Fourier moments of the normalized boundary flux \eqref{eq:normalized-flux}. The key observation is that the low-frequency moment sequence has a finite exponential-sum structure. Consequently, the associated Hankel matrices have a finite-rank structure determined by the number of point sources. We then regard the Hankel matrices as analytic matrix-valued functions of the Laplace variable \(s\) and study the vanishing order of their determinants near \(s=0\). Under a natural non-degeneracy condition, this vanishing order encodes the source number and leads to an argument-principle source-counting formula. Once the source number has been identified, the same low-frequency moment sequence can be used to recover the source locations and strengths by an annihilating-polynomial method followed by a Vandermonde reconstruction.

For \(n=0, 1, \ldots\), define the positive Fourier moments of the normalized boundary flux of equation \eqref{eq:laplace-problem} by
\begin{align}
\label{eq:moment-def}
\mathcal{M}_n(s):=\frac1{2\pi}
\int_0^{2\pi}
\mathcal F(\theta,s)\e^{-\iu n\theta}\dd\theta.
\end{align}
Using \eqref{eq:normal-green-fourier}, we obtain
\begin{align}
\label{eq:moment-formula}
\mathcal{M}_n(s)
=-\frac1{2\pi}\sum_{j=1}^{N}
q_j\frac{I_n(\sqrt{s}\rho_j)}{I_n(\sqrt{s})}
\e^{-\iu n\phi_j}.
\end{align}
The ratio $\frac{I_n(\sqrt{s}\rho_j)}{I_n(\sqrt{s})}$ is analytic at \(s=0\). Indeed, using
$$
I_n(z)
=
\frac{1}{n!}
\left(\frac z2\right)^n
\left[
1+\frac{z^2}{4(n+1)}+O(z^4)
\right],
$$
we have
\begin{align}
\label{eq:bessel-ratio-expansion}
\frac{I_n(\sqrt{s}\rho_j)}{I_n(\sqrt{s})}
=
\rho_j^n
\left[
1+\frac{\rho_j^2-1}{4(n+1)}s+O(s^2)
\right].
\end{align}
Here \(s=0\) refers to the zero Laplace-frequency limit, whereas \(n\) denotes the Fourier mode index. 
Consequently,
\begin{align}
\label{eq:moment-zero}
\mathcal{M}_n(0)=-\frac1{2\pi}\sum_{j=1}^{N}q_j \left(\rho_j \e^{-\iu\phi_j}\right)^n.
\end{align}
Define
\begin{align}
\label{eq:lambda-weight}
\lambda_j:=\rho_j \e^{-\iu\phi_j}=\overline{p_j},
\qquad
w_j:=-\frac{q_j}{2\pi}.
\end{align}
Then the low-frequency moment sequence admits the finite exponential-sum representation
\begin{align}
\label{eq:finite-sum}
\mathcal{M}_n(0)=\sum_{j=1}^{N}w_j\lambda_j^n,
\qquad n=0,1,\ldots.
\end{align}
Here the number of exponential nodes is exactly the source number \(N\), the nodes \(\lambda_j=\overline{p_j}\) encode the source locations, and the weights \(w_j=-q_j/(2\pi)\) encode the source strengths.

For each \(m=1, 2, \ldots\), we define the \(m\times m\) Hankel matrix
\begin{align}
\label{eq:hankel-matrix}
H_m(s):=\bigl(\mathcal{M}_{r+c}(s)\bigr)_{r,c=0}^{m-1},
\end{align}
i.e.,
\begin{align}
H_m(s)=
\begin{pmatrix}
\mathcal{M}_0(s)&\mathcal{M}_1(s)&\cdots&\mathcal{M}_{m-1}(s)\\
\mathcal{M}_1(s)&\mathcal{M}_2(s)&\cdots&\mathcal{M}_m(s)\\
\vdots&\vdots&\ddots&\vdots\\
\mathcal{M}_{m-1}(s)&\mathcal{M}_m(s)&\cdots&\mathcal{M}_{2m-2}(s)
\end{pmatrix}.
\end{align}
The associated scalar characteristic function is then given by the Hankel determinant
\begin{align}
\label{eq:delta-def}
\Delta_m(s):=\det H_m(s).
\end{align}
At \(s=0\), using \eqref{eq:finite-sum}, we have
\begin{align}
\mathcal{M}_{r+c}(0)=\sum_{j=1}^{N}w_j\lambda_j^{r+c}.
\end{align}
Therefore
\begin{align}
\label{eq:hankel-factorization}
H_m(0)
=
V_m(\lambda)
\diag(w_1,\ldots,w_N)
V_m(\lambda)^{\top},
\end{align}
where
\begin{align}
\label{eq:vandermonde}
V_m(\lambda)
:=
\left(\lambda_j^r\right)_
{
r=0,\ldots,m-1;\ j=1,\ldots,N
}.
\end{align}

\begin{proposition}
\label{prop:rank-zero}
Assume that $\lambda_i\neq\lambda_j$, $i\neq j$,
and
$w_j\neq0$, $j=1, \ldots, N$.
Then
$$
\rank H_m(0)\leq N.
$$
Moreover, if \(m\geq N\), then
$$
\rank H_m(0)=N.
$$
In particular, for \(m=N\),
\begin{align}
\label{eq:deltaN-nonzero}
\Delta_N(0)
=
\left(\prod_{j=1}^{N}w_j\right)
\left[
\prod_{1\leq i<j\leq N}
(\lambda_j-\lambda_i)
\right]^2
\neq0.
\end{align}
\end{proposition}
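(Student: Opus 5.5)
The plan is to work directly from the factorization \eqref{eq:hankel-factorization}, $H_m(0)=V_m(\lambda)\,W\,V_m(\lambda)^{\top}$ with $W:=\diag(w_1,\ldots,w_N)$, which was derived from the finite exponential-sum representation \eqref{eq:finite-sum}. The upper bound comes for free. $H_m(0)$ factors through the $N\times N$ matrix $W$, so $\rank H_m(0)\leq\rank W\leq N$ for every $m$.

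For $m\geq N$, we show the rank equals $N$ by exhibiting full-rank factors.
\begin{itemize}
\item $V_m(\lambda)$ is $m\times N$, and its top $N\times N$ block $V_N(\lambda)$ is the square Vandermonde matrix with determinant $\prod_{i<j}(\lambda_j-\lambda_i)$. This is nonzero because the $\lambda_j$ are distinct, so $V_m(\lambda)$ has full column rank $N$.
\item Consequently $V_m(\lambda)^{\top}$ has full row rank $N$.
\item $W$ is invertible since every $w_j\neq0$.
\end{itemize}
Write $A:=V_m(\lambda)$. Then $AWA^{\top}$ has rank $N$: $A$ is injective on $\C^N$ and $WA^{\top}\colon\C^m\to\C^N$ is surjective, so the image of $AWA^{\top}$ is $A(\C^N)$, which has dimension $N$. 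This argument only uses ranks, so it is indifferent to the fact that $A^{\top}$ appears rather than a conjugate transpose.

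For $m=N$ all three factors are square, and the multiplicativity of the determinant gives
\[
\Delta_N(0)=\det V_N(\lambda)\,\det W\,\det V_N(\lambda)^{\top}=\Bigl(\prod_{j}w_j\Bigr)\Bigl[\prod_{i<j}(\lambda_j-\lambda_i)\Bigr]^2 .
\]
The squared Vandermonde factor is invariant under the ordering convention, so \eqref{eq:deltaN-nonzero} follows, and it is nonzero by the hypotheses.

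There is no real obstacle here. The only point needing care is the rank-of-product step, and the injectivity and surjectivity argument above handles it cleanly.
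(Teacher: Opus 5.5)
Your proof is correct and follows the paper's own argument: the factorization gives $\rank H_m(0)\leq N$, full column rank of $V_m(\lambda)$ together with invertibility of $\diag(w_1,\ldots,w_N)$ gives rank $N$ for $m\geq N$, and multiplicativity of the determinant gives the formula for $\Delta_N(0)$. Your injectivity/surjectivity argument simply spells out the rank-of-product step that the paper states in one line.
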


\begin{proof}
The factorization \eqref{eq:hankel-factorization} gives
$$
\rank H_m(0)\leq N.
$$
If \(m\geq N\), then the Vandermonde matrix \(V_m(\lambda)\) has full column rank because the \(\lambda_j\)'s are distinct. Since \(\diag(w_1,\ldots,w_N)\) is invertible, it follows that
$$
\rank H_m(0)=N.
$$
For \(m=N\), taking determinants in \eqref{eq:hankel-factorization} gives
$$
\Delta_N(0)
=
\det V_N(\lambda)
\left(\prod_{j=1}^{N}w_j\right)
\det V_N(\lambda)^{\top}.
$$
Since
$$
\det V_N(\lambda)
=
\prod_{1\leq i<j\leq N}(\lambda_j-\lambda_i),
$$
we obtain \eqref{eq:deltaN-nonzero}.
\end{proof}


Proposition~\ref{prop:rank-zero} gives a static algebraic characterization of the source number through the rank of the low-frequency Hankel matrix \(H_m(0)\). To connect this finite-rank structure with a contour-counting principle in the complex Laplace domain, we next consider the full analytic Hankel matrix family \(H_m(s)\) and its determinant
$$
\Delta_m(s)=\det H_m(s).
$$
The rank deficiency of \(H_m(0)\) for \(m>N\) forces \(\Delta_m(0)=0\). Hence the relevant quantity is not merely whether \(\Delta_m(0)\) vanishes, but the order to which it vanishes at \(s=0\). This vanishing order will be used below to characterize the number of sources by the argument principle.

Since each moment \(\mathcal M_n(s)\) is analytic near \(s=0\), the determinant \(\Delta_m(s)\) is also analytic near \(s=0\). Thus there exist a nonnegative integer \(\tau_m\) and an analytic function \(A_m\), with \(A_m(0)\neq0\), such that
\begin{align}
\label{eq:delta-expansion}
\Delta_m(s)
=
s^{\tau_m}A_m(s).
\end{align}
We call
\begin{align}
\label{eq:tau-def}
\tau_m
:=
\operatorname{ord}_{s=0}\Delta_m(s)
\end{align}
the vanishing order of the Hankel determinant characteristic at \(s=0\).
%
For \(m\leq N\), one expects generically $\Delta_m(0)\neq0$,
and hence $\tau_m=0$.
For \(m>N\), Proposition~\ref{prop:rank-zero} implies
$\Delta_m(0)=0$.
The determinant then vanishes at least to order \(m-N\) under a generic rank-lifting mechanism.

\begin{assumption}[Generic determinant lifting]
\label{ass:det-lifting}
For every \(m>N\), the analytic perturbation \(H_m(s)-H_m(0)\) lifts the \(m-N\) dimensional null space of \(H_m(0)\) to first order. Equivalently,
\begin{align}
\label{eq:lifting-order}
\ord_{s=0}\Delta_m(s)=m-N.
\end{align}
\end{assumption}

\begin{remark}
Assumption~\ref{ass:det-lifting} is a transversality condition. It rules out accidental cancellations in the first nonzero term of the determinant expansion. Numerically, this condition is satisfied for generic source configurations.
\end{remark}

\begin{theorem}
\label{thm:source-number-zero-order}
Assume that the source locations are distinct, the source strengths are nonzero,
and Assumption~\ref{ass:det-lifting} holds. Then the vanishing order \(\tau_m\)
defined in \eqref{eq:tau-def} satisfies
\begin{align}
\label{eq:nu-pattern}
\tau_m
=
\begin{cases}
0, & m\leq N,\\
m-N, & m>N.
\end{cases}
\end{align}
Consequently, the number of point sources is characterized by
\begin{align}
\label{eq:N-from-nu}
N
=
\min\{m:\tau_m>0\}-1.
\end{align}
\end{theorem}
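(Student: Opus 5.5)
The plan is to split by the Hankel order $m$ into the three regimes $m=N$, $m>N$ and $m<N$, and then read off \eqref{eq:N-from-nu}. First, distinct locations $p_j$ give distinct nodes $\lambda_j=\overline{p_j}$, and nonzero strengths give nonzero weights $w_j=-q_j/(2\pi)$. So the hypotheses of Proposition~\ref{prop:rank-zero} hold. Throughout I use the factorization \eqref{eq:delta-expansion}, which is valid because every $\mathcal M_n$ is analytic at $s=0$ by \eqref{eq:bessel-ratio-expansion}. In particular, $\tau_m=0$ exactly when $\Delta_m(0)\neq0$.

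\emph{Case $m=N$.} Here \eqref{eq:deltaN-nonzero} gives $\Delta_N(0)\neq0$ directly, so $\tau_N=0$.

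\emph{Case $m>N$.} Proposition~\ref{prop:rank-zero} gives $\rank H_m(0)=N<m$, hence $\Delta_m(0)=0$ and $\tau_m\ge1$. The exact value $\tau_m=m-N$ is precisely the content of Assumption~\ref{ass:det-lifting}, so I would simply invoke \eqref{eq:lifting-order}. For context I would add a short remark on why $m-N$ is the natural lower bound. Choose a constant invertible $P$ whose last $m-N$ columns span $\ker H_m(0)$. Then in $P^\top H_m(s)P$ the last $m-N$ columns are $O(s)$, so $s^{m-N}$ divides the determinant. This shows the assumption only excludes extra cancellation in the leading coefficient.

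\emph{Case $m<N$.} This is the main obstacle. $V_m(\lambda)$ is an $m\times N$ matrix of full row rank, but $H_m(0)=V_mWV_m^\top$ with $W=\diag(w_1,\ldots,w_N)$ need not be invertible. By the Cauchy--Binet formula,
\[
\Delta_m(0)=\sum_{S\subset\{1,\ldots,N\},\,|S|=m}\Bigl(\prod_{j\in S}w_j\Bigr)\Bigl(\det V_S\Bigr)^2 ,
\]
where $V_S$ is the $m\times m$ submatrix of $V_m(\lambda)$ with columns indexed by $S$. Because the $w_j$ are real with possibly mixed signs and the $\lambda_j$ are complex, this sum can vanish on a proper algebraic subset of configurations. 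The sum is a nonzero polynomial in $(w,\lambda)$; for instance it is nonzero when $w_j=\delta_{j\le m}$, since then only $S=\{1,\ldots,m\}$ contributes. Hence $\Delta_m(0)\neq0$ off a proper algebraic subset of configurations, i.e.\ generically.

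So I would argue one of two ways for $m<N$. Either I read Assumption~\ref{ass:det-lifting} together with the stated genericity as including non-vanishing of these leading minors $\Delta_m(0)$, $m<N$, as the surrounding text suggests ("for $m\le N$ one expects generically $\Delta_m(0)\neq0$"). Or I state this non-degeneracy explicitly and justify it by the Cauchy--Binet computation above. Either way this gives $\tau_m=0$ for all $m\le N$, which proves \eqref{eq:nu-pattern}.

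Finally, \eqref{eq:nu-pattern} says $\tau_m=0$ for $m\le N$ and $\tau_{N+1}=1>0$. Therefore $\min\{m:\tau_m>0\}=N+1$, which is \eqref{eq:N-from-nu}.
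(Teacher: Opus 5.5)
Your proposal is correct and follows the paper's proof: for $m>N$ both arguments use $\rank H_m(0)=N<m$ together with Assumption~\ref{ass:det-lifting}, and for $m\le N$ both rely on $\Delta_m(0)\neq0$. Your version adds a rigorous treatment of $m=N$ via \eqref{eq:deltaN-nonzero} and the bound $\tau_m\ge m-N$, which you prove without using the assumption.

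The point you flag for $m<N$ is genuine and is shared by the paper, whose proof simply appeals to ``generic non-degeneracy of the leading Hankel minors''. That condition is not among the theorem's hypotheses: for instance, $N=2$ with $q_1=-q_2$ gives $\Delta_1(0)=\mathcal M_0(0)=0$. So stating it as an explicit assumption, justified as generic by your Cauchy--Binet argument, is the right repair.
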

\begin{proof}
For \(m\leq N\), the generic non-degeneracy of the leading Hankel minors gives
$\Delta_m(0)\neq 0$.
Hence
$\tau_m=0$.
For \(m>N\), the determinant vanishes at \(s=0\) since
$$
\rank H_m(0)=N<m.
$$
By Assumption~\ref{ass:det-lifting}, the vanishing order is exactly
$$
\tau_m=m-N.
$$
This proves \eqref{eq:nu-pattern}. The identity \eqref{eq:N-from-nu} follows immediately because the first \(m\) for which \(\tau_m>0\) is \(m=N+1\).
\end{proof}


The preceding theorem characterizes the source number in terms of the
vanishing order \(\tau_m\). To obtain a computable formula for \(\tau_m\), we
now express it as a contour integral by means of the argument principle.
Let \(\Gamma_r\) be a positively oriented circle in the \(s\)-plane:
\begin{align}
\label{eq:circle-contour}
\Gamma_r:=\{s\in\C: |s|=r\},
\end{align}
where \(r>0\) is sufficiently small and \(\Delta_m\) has no zeros on \(\Gamma_r\). By the argument principle,
\begin{align}
\label{eq:arg-principle-delta}
\frac{1}{2\pi \iu}
\oint_{\Gamma_r}
\frac{\Delta_m'(s)}{\Delta_m(s)}d s
= \ord_{s=0}\Delta_m(s)=\tau_m,
\end{align}
provided \(\Gamma_r\) contains no zeros of \(\Delta_m\) other than the zero at \(s=0\).
Therefore, for each \(m\), we compute
\begin{align}
\label{eq:nu-contour}
\tau_m(r)
:=
\frac{1}{2\pi \iu}
\oint_{\Gamma_r}
\frac{\Delta_m'(s)}{\Delta_m(s)} ds.
\end{align}
For sufficiently small \(r\),
$$
\tau_m(r)=\tau_m.
$$
The source number is then estimated by
\begin{align}
\label{eq:N-estimator}
\widehat N=\min\{m:\tau_m(r)>0\}-1.
\end{align}

\begin{remark}
The contour integral in \eqref{eq:arg-principle-delta} is used to compute the zero order of the Hankel determinant characteristic at \(s=0\). Equivalently, one may compute the winding number of the curve \(\Delta_m(\Gamma_r)\) around the origin.
\end{remark}

Based on this winding-number interpretation, the source-counting procedure can be summarized as follows.

\paragraph{Source-counting procedure.}
For a prescribed maximum order \(m_{\max}\), compute
\[
\Delta_m^{(L)}(s)=\det H_m^{(L)}(s),
\qquad
m=1,\ldots,m_{\max},
\]
on the contour \(\Gamma_r\). The zero count is evaluated by the winding number of the closed curve
\[
\Delta_m^{(L)}(\Gamma_r)
=
\{\Delta_m^{(L)}(s):s\in\Gamma_r\}
\]
around the origin:
\[
\tau_m^{(L)}(r)
=
\frac1{2\pi}
\operatorname{Var}_{\Gamma_r}
\arg \Delta_m^{(L)}(s),
\]
where \(\operatorname{Var}_{\Gamma_r}\arg\) denotes the total continuous change of the argument along \(\Gamma_r\). Equivalently,
\[
\tau_m^{(L)}(r)
=
\frac{1}{2\pi\mathrm i}
\oint_{\Gamma_r}
\frac{(\Delta_m^{(L)})'(s)}
{\Delta_m^{(L)}(s)}\,ds.
\]
The number of point sources is estimated by
\[
\widehat N
=
\min\{m:\tau_m^{(L)}(r)>0\}-1.
\]
The estimate is accepted only when it is stable under small changes of the contour radius \(r\).

\section{Discrete Data, Noise Propagation, and Rouch\'e Stability}
\label{sec:noise-rouche-stability}

In this section, we analyze the stability of the Hankel determinant zero count under boundary discretization, finite-time observation, and measurement noise. The main purpose is to quantify how perturbations in the measured boundary flux propagate to the determinant characteristic $\Delta_m(s)=\det H_m(s)$
and to give a sufficient Rouch\'e condition under which the contour count remains unchanged.


In computations, the boundary flux is sampled at \(L\) uniformly distributed boundary points
$$
\theta_\ell=\frac{2\pi(\ell-1)}{L},
\qquad
\ell=1,\ldots,L.
$$
The continuous Fourier moment $\mathcal{M}_n(s)$ defined in \eqref{eq:moment-def} is approximated by the discrete Fourier sum
\begin{align}
\label{eq:discrete-moment}
\mathcal M_n^{(L)}(s)
=
\frac1L
\sum_{\ell=1}^{L}
\mathcal F(\theta_\ell,s)\e^{-\iu n\theta_\ell},
\qquad
n=0,1,\ldots,2m-2.
\end{align}
Accordingly, the  discrete Hankel matrix and determinant characteristic are defined by
\begin{align}
\label{eq:discrete-hankel}
H_m^{(L)}(s)
=
\bigl(\mathcal M_{a+b}^{(L)}(s)\bigr)_{a,b=0}^{m-1},
\end{align}
and
\begin{align}
\label{eq:discrete-delta}
\Delta_m^{(L)}(s)
=
\det H_m^{(L)}(s).
\end{align}
The discrete contour count is
\begin{align}
\label{eq:discrete-count}
\tau_m^{(L)}(r)
=
\frac{1}{2\pi\iu}
\oint_{\Gamma_r}
\frac{(\Delta_m^{(L)})'(s)}
{\Delta_m^{(L)}(s)}\,ds,
\end{align}
provided that \(\Delta_m^{(L)}\) has no zeros on \(\Gamma_r\). Equivalently, \(\tau_m^{(L)}(r)\) can be evaluated as the winding number of the closed curve
$$
\Delta_m^{(L)}(\Gamma_r)
=
\{\Delta_m^{(L)}(s):s\in\Gamma_r\}
$$
around the origin.

\subsection{From measurement noise to moment perturbation}

Let the measured boundary flux data be
\begin{align}
\label{eq:noisy-data}
f_\ell^\delta(t)
=
f(\theta_\ell,t)+\eta_\ell(t),
\qquad
\ell=1,\ldots,L,
\end{align}
where the noise satisfies
\begin{align}
\label{eq:noise-assumption}
\|\eta_\ell\|_{L^2(0,T)}
\leq
\delta,
\qquad
\ell=1,\ldots,L.
\end{align}
For \(s\in\Gamma_r\), define the truncated noisy Laplace transform by
\begin{align}
\label{eq:noisy-laplace}
\widehat f_\ell^{\delta,T}(s)
:=
\int_0^T \e^{-st}f_\ell^\delta(t)\,dt,
\end{align}
and the corresponding normalized data by
\begin{align}
\label{eq:noisy-normalized-data}
\mathcal F_\ell^{\delta,T}(s)
:=
s\widehat f_\ell^{\delta,T}(s).
\end{align}
The noisy discrete moments are then defined as
\begin{align}
\label{eq:noisy-discrete-moment}
\mathcal M_n^{\delta,L,T}(s)
:=
\frac1L
\sum_{\ell=1}^{L}
\mathcal F_\ell^{\delta,T}(s)\e^{-\iu n\theta_\ell}.
\end{align}


\begin{lemma}
\label{lem:data-to-moment-noise}
Assume that the noise terms \(\eta_\ell\) satisfy \eqref{eq:noise-assumption}.
Then, for every \(s\in\Gamma_r\),
\begin{align}
\label{eq:laplace-noise-bound}
\left|
s\int_0^T \e^{-st}\eta_\ell(t)\,dt
\right|
\leq
r\delta\sqrt{T}\,\e^{rT},
\qquad
\ell=1,\ldots,L.
\end{align}
Consequently, for all \(n=0,1,\ldots,2m-2\), the induced perturbation of the discrete moments satisfies
\begin{align}
\label{eq:moment-noise-bound}
\left|
\mathcal M_n^{\delta,L,T}(s)
-
\mathcal M_n^{L,T}(s)
\right|
\leq
r\delta\sqrt{T}\,\e^{rT},
\qquad
s\in\Gamma_r,
\end{align}
where \(\mathcal M_n^{L,T}(s)\) denotes the discrete moment computed from the noiseless truncated data.
\end{lemma}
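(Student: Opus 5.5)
The plan is to bound the single-channel Laplace perturbation directly with Cauchy--Schwarz, and then push that bound through the discrete Fourier sum \eqref{eq:noisy-discrete-moment}. The sum is an average with unimodular weights, so it cannot amplify a uniform bound.

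\textbf{Step 1: the single-channel bound \eqref{eq:laplace-noise-bound}.} Fix \(s\in\Gamma_r\), so \(|s|=r\) and \(\operatorname{Re}s\geq -r\). For \(t\in(0,T)\) this gives
\[
|\e^{-st}|=\e^{-t\operatorname{Re}s}\leq \e^{rt}\leq \e^{rT}.
\]
Then
\[
\left|s\int_0^T \e^{-st}\eta_\ell(t)\,dt\right|
\leq r\int_0^T \e^{rT}|\eta_\ell(t)|\,dt
\leq r\,\e^{rT}\sqrt{T}\,\|\eta_\ell\|_{L^2(0,T)}
\leq r\delta\sqrt{T}\,\e^{rT},
\]
using Cauchy--Schwarz with the constant function \(1\) on \((0,T)\) and then \eqref{eq:noise-assumption}. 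A slightly sharper constant, \(\|\e^{-st}\|_{L^2(0,T)}\), could be kept instead, but the stated form only needs the crude bound \(\sup_{t\le T}|\e^{-st}|\le \e^{rT}\).

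\textbf{Step 2: from data to moments \eqref{eq:moment-noise-bound}.} The maps \(f_\ell\mapsto \mathcal F_\ell^{\delta,T}\) and then \(\mathcal F_\ell^{\delta,T}\mapsto \mathcal M_n^{\delta,L,T}\) are linear. Subtracting the noiseless truncated moments \(\mathcal M_n^{L,T}\) therefore leaves
\[
\mathcal M_n^{\delta,L,T}(s)-\mathcal M_n^{L,T}(s)=\frac1L\sum_{\ell=1}^L \Bigl(s\int_0^T \e^{-st}\eta_\ell(t)\,dt\Bigr)\e^{-\iu n\theta_\ell}.
\]
Since \(|\e^{-\iu n\theta_\ell}|=1\), the triangle inequality and Step 1 bound this by \(\frac1L\cdot L\cdot r\delta\sqrt{T}\,\e^{rT}\). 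The bound is uniform in \(n=0,\ldots,2m-2\) and in \(s\in\Gamma_r\).

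\textbf{Main obstacle.} There is no real difficulty. The only point needing care is in Step 1: one must bound \(|\e^{-st}|\) for complex \(s\) on the whole circle, including \(\operatorname{Re}s<0\). That is exactly where the factor \(\e^{rT}\) comes from, and it is why no decay of \(\e^{-st}\) can be used.
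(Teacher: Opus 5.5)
Your proof is correct and follows essentially the same route as the paper: Cauchy--Schwarz on $(0,T)$ with $|\e^{-st}|\le \e^{rt}\le \e^{rT}$ for $|s|=r$, then the triangle inequality over the unimodular discrete Fourier weights. The only difference is cosmetic. The paper pairs $\eta_\ell$ with $\e^{-st}$ in Cauchy--Schwarz and then uses $\int_0^T \e^{2rt}\,dt\le T\e^{2rT}$, whereas you take the supremum of $|\e^{-st}|$ first and pair $|\eta_\ell|$ with the constant $1$; both give the same constant.
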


\begin{proof}

By the Cauchy--Schwarz inequality,
\begin{align}
\left|
\int_0^T \e^{-st}\eta_\ell(t)\,dt
\right|
&\leq
\|\eta_\ell\|_{L^2(0,T)}
\left(
\int_0^T |\e^{-st}|^2\,dt
\right)^{1/2}  \notag\\
&=
\|\eta_\ell\|_{L^2(0,T)}
\left(
\int_0^T \e^{-2\Re s\,t}\,dt
\right)^{1/2}.
\end{align}
Since \(s\in\Gamma_r\), we have \(|s|=r\) and
\[
-\Re s\leq |s|=r.
\]
Hence
\[
\e^{-2\Re s\,t}
\leq
\e^{2rt},
\qquad
0\leq t\leq T.
\]
Therefore
\begin{align}
\left|
\int_0^T \e^{-st}\eta_\ell(t)\,dt
\right|
&\leq
\delta
\left(
\int_0^T \e^{2rt}\,dt
\right)^{1/2}  \notag\\
&\leq
\delta\sqrt{T}\,\e^{rT}.
\end{align}
Multiplying by \(|s|=r\), we obtain \eqref{eq:laplace-noise-bound}.
Using the definition \eqref{eq:noisy-discrete-moment} and the identity
\(|\e^{-\iu n\theta_\ell}|=1\), we obtain
\begin{align}
\left|
\mathcal M_n^{\delta,L,T}(s)
-
\mathcal M_n^{L,T}(s)
\right|
&\leq
\frac1L
\sum_{\ell=1}^{L}
\left|
s\int_0^T \e^{-st}\eta_\ell(t)\,dt
\right|  \notag\\
&\leq
r\delta\sqrt{T}\,\e^{rT}.
\end{align}
This proves the result.
\end{proof}

In addition to measurement noise, the computed moments also contain deterministic errors caused by boundary quadrature and finite-time truncation. For a fixed Hankel order \(m\), we assume the following aggregate perturbation bound:
\begin{align}
\label{eq:total-moment-error}
\max_{0\leq n\leq 2m-2}
\sup_{s\in\Gamma_r}
\left|
\mathcal M_n^{\delta,L,T}(s)-\mathcal M_n(s)
\right|
\leq
\eta_m(r,\delta,L,T).
\end{align}
A typical decomposition is
\begin{align}
\label{eq:eta-decomposition}
\eta_m(r,\delta,L,T)
\leq
r\delta\sqrt{T}\,\e^{rT}
+
\eta_{\rm disc}(m,L,r)
+
\eta_{\rm tr}(m,T,r),
\end{align}
where \(\eta_{\rm disc}\) denotes the boundary quadrature error and \(\eta_{\rm tr}\) denotes the deterministic time-truncation error. In the following analysis, only the aggregate bound \eqref{eq:total-moment-error} is needed.

\subsection{From moment perturbation to determinant perturbation}

Define the perturbed Hankel matrix by
$$
H_m^{\delta,L,T}(s)
=
\bigl(\mathcal M_{a+b}^{\delta,L,T}(s)\bigr)_{a,b=0}^{m-1},
$$
which corresponds to the ideal Hankel matrix \(H_m(s)\) introduced in Section~\ref{sec:hankel-counting}. The perturbed determinant characteristic is
\begin{align}
\label{eq:perturbed-delta}
\Delta_m^{\delta,L,T}(s)
:=
\det H_m^{\delta,L,T}(s).
\end{align}

\begin{lemma}[Moment noise to determinant noise]
\label{lem:moment-to-determinant}
Assume \eqref{eq:total-moment-error}. Let
\begin{align}
\label{eq:Bmr-def}
B_m(r)
:=
\sup_{s\in\Gamma_r}
\max\left\{
\|H_m(s)\|_2,\,
\|H_m^{\delta,L,T}(s)\|_2
\right\}.
\end{align}
Then, for all \(s\in\Gamma_r\),
\begin{align}
\label{eq:determinant-perturb-bound}
\left|
\Delta_m^{\delta,L,T}(s)-\Delta_m(s)
\right|
\leq
m^2 B_m(r)^{m-1}\eta_m(r,\delta,L,T).
\end{align}
\end{lemma}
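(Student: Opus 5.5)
Fix \(s\in\Gamma_r\) and write \(A:=H_m(s)\) and \(\tilde A:=H_m^{\delta,L,T}(s)\). The plan is to telescope the determinant difference one column at a time, so that each term contains exactly one column of \(\tilde A-A\) and \(m-1\) columns taken from \(A\) or \(\tilde A\). Let \(a_k\) and \(\tilde a_k\) denote the \(k\)-th columns, and let \(C^{(k)}\) be the matrix whose first \(k\) columns come from \(\tilde A\) and whose remaining columns come from \(A\). Then \(C^{(0)}=A\) and \(C^{(m)}=\tilde A\). By multilinearity of the determinant in columns,
\begin{align*}
\det\tilde A-\det A=\sum_{k=1}^{m}\bigl(\det C^{(k)}-\det C^{(k-1)}\bigr)
=\sum_{k=1}^{m}\det D^{(k)},
\end{align*}
where \(D^{(k)}\) is \(C^{(k)}\) with its \(k\)-th column replaced by \(\tilde a_k-a_k\).

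Each term is bounded by Hadamard's inequality, \(|\det D^{(k)}|\le\prod_j\|d_j\|_2\), where \(d_j\) are the columns of \(D^{(k)}\).
\begin{itemize}
\item Every unperturbed column is a column of \(A\) or \(\tilde A\), so \(\|a_j\|_2\le\|A\|_2\le B_m(r)\) and likewise \(\|\tilde a_j\|_2\le B_m(r)\).
\item The difference column \(\tilde a_k-a_k\) has \(m\) entries of the form \(\mathcal M_n^{\delta,L,T}(s)-\mathcal M_n(s)\) with \(0\le n\le 2m-2\). By \eqref{eq:total-moment-error} each entry is at most \(\eta_m\) in modulus, so \(\|\tilde a_k-a_k\|_2\le\sqrt m\,\eta_m\).
\end{itemize}
Hence \(|\det D^{(k)}|\le\sqrt m\,B_m(r)^{m-1}\eta_m\). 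Summing over \(k\) gives
\begin{align*}
\bigl|\Delta_m^{\delta,L,T}(s)-\Delta_m(s)\bigr|\le m^{3/2}B_m(r)^{m-1}\eta_m\le m^2B_m(r)^{m-1}\eta_m,
\end{align*}
which is \eqref{eq:determinant-perturb-bound}. Since the constant was allowed to be \(m^2\), there is room to spare: even the crude bound \(\|\tilde a_k-a_k\|_2\le\|\tilde A-A\|_F\le m\eta_m\) would yield exactly \(m^2\).

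The only point requiring care is bounding the columns of the mixed matrices uniformly by \(B_m(r)\), since Hadamard's inequality works with column norms rather than operator norms. This is immediate from \(\|Ae_j\|_2\le\|A\|_2\). The supremum over \(\Gamma_r\) in the definition \eqref{eq:Bmr-def} of \(B_m(r)\) then makes the bound uniform in \(s\in\Gamma_r\).
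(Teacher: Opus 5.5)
Your proof is correct, and it takes a somewhat different route from the paper. The paper bounds the error matrix by $\|E_m(s)\|_2\le\|E_m(s)\|_F\le m\,\eta_m$ and then invokes, without proof, the determinant Lipschitz estimate $|\det A-\det B|\le m\max\{\|A\|_2,\|B\|_2\}^{m-1}\|A-B\|_2$; the two factors of $m$ combine to give $m^2$.

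Your column-by-column telescoping with Hadamard's inequality is in effect a self-contained proof of that Lipschitz estimate. Since $\|\tilde a_k-a_k\|_2=\|(\tilde A-A)e_k\|_2\le\|\tilde A-A\|_2$, each term satisfies $|\det D^{(k)}|\le B_m(r)^{m-1}\|\tilde A-A\|_2$, and summing over $k$ recovers exactly the paper's inequality. You bound only the single perturbed column, using $\sqrt m\,\eta_m$, instead of the operator norm of the whole error matrix. This gains a factor $\sqrt m$ and gives the sharper constant $m^{3/2}$.

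You also correctly identify the one delicate point. The mixed matrices $C^{(k)}$ need not satisfy $\|C^{(k)}\|_2\le B_m(r)$, but Hadamard's inequality needs only the column bounds $\|Ae_j\|_2\le\|A\|_2$, so nothing is lost. The paper's version is shorter but rests on an inequality quoted without justification. Yours is elementary, fully justified, and slightly sharper, at the cost of a few more lines.
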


\begin{proof}
Let
$$
E_m(s):=H_m^{\delta,L,T}(s)-H_m(s).
$$
By \eqref{eq:total-moment-error}, every entry of \(E_m(s)\) is bounded by
\(\eta_m(r,\delta,L,T)\). Therefore
\begin{align}
\label{eq:E-norm-bound}
\|E_m(s)\|_2
\leq
\|E_m(s)\|_F
\leq
m\,\eta_m(r,\delta,L,T).
\end{align}
We use the determinant Lipschitz estimate
\begin{align}
\label{eq:det-lipschitz}
|\det A-\det B|
\leq
m
\max\{\|A\|_2,\|B\|_2\}^{m-1}
\|A-B\|_2.
\end{align}
Applying \eqref{eq:det-lipschitz} with
$$
A=H_m^{\delta,L,T}(s),
\qquad
B=H_m(s),
$$
and using \eqref{eq:E-norm-bound}, we obtain
\begin{align}
\left|
\Delta_m^{\delta,L,T}(s)-\Delta_m(s)
\right|
&\leq
m B_m(r)^{m-1}\|E_m(s)\|_2  \notag\\
&\leq
m^2 B_m(r)^{m-1}\eta_m(r,\delta,L,T).
\end{align}
This proves the estimate.
\end{proof}

\subsection{Rouch\'e-stable determinant zero count}

Define the determinant margin on the contour by
\begin{align}
\label{eq:det-margin}
\mu_m(r)
:=
\min_{s\in\Gamma_r}
|\Delta_m(s)|.
\end{align}
We assume that \(\Delta_m\) has no zeros on \(\Gamma_r\), so that \(\mu_m(r)>0\).

\begin{theorem}[Noise-stable Hankel determinant count]
\label{thm:noise-stable-count}
Assume \eqref{eq:total-moment-error}. If
\begin{align}
\label{eq:noise-rouche-condition}
m^2 B_m(r)^{m-1}\eta_m(r,\delta,L,T)
<
\mu_m(r),
\end{align}
then \(\Delta_m^{\delta,L,T}\) and \(\Delta_m\) have the same number of zeros inside \(\Gamma_r\), counting multiplicities. Equivalently,
\begin{align}
\label{eq:noisy-count-equality}
\tau_m^{\delta,L,T}(r)
=
\tau_m(r),
\end{align}
where
\begin{align}
\label{eq:noisy-tau-def}
\tau_m^{\delta,L,T}(r)
=
\frac{1}{2\pi\iu}
\oint_{\Gamma_r}
\frac{(\Delta_m^{\delta,L,T})'(s)}
{\Delta_m^{\delta,L,T}(s)}\,ds.
\end{align}
\end{theorem}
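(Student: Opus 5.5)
The plan is to apply Rouch\'e's theorem to the pair $f=\Delta_m$ and $g=\Delta_m^{\delta,L,T}-\Delta_m$ on the closed disk $\overline{D_r}=\{|s|\le r\}$. Then read off equality of the argument-principle counts \eqref{eq:arg-principle-delta} and \eqref{eq:noisy-tau-def}.

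\textbf{Step 1: analyticity.} We need both determinants holomorphic on a neighborhood of $\overline{D_r}$. For $\Delta_m$ this holds for $r$ small: $\mathcal M_n$ in \eqref{eq:moment-formula} is analytic near $0$, since the Bessel ratios are even in $\sqrt s$ and the first pole, the first zero of $I_n(\sqrt s)$, lies on the negative real axis. We take $r$ below that threshold, as the paper implicitly does. For the perturbed moments, $s\mapsto s\int_0^T\e^{-st}f_\ell^\delta(t)\,dt$ is entire, because the integral is over a finite interval with $f_\ell^\delta\in L^2(0,T)$. Hence $\mathcal M_n^{\delta,L,T}$ and $\Delta_m^{\delta,L,T}$, being polynomials in entire functions, are entire.

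\textbf{Step 2: the strict Rouch\'e inequality on $\Gamma_r$.} For $s\in\Gamma_r$, Lemma~\ref{lem:moment-to-determinant}, together with the hypothesis \eqref{eq:total-moment-error} and the definition \eqref{eq:det-margin}, gives
\[
|\Delta_m^{\delta,L,T}(s)-\Delta_m(s)|\le m^2B_m(r)^{m-1}\eta_m(r,\delta,L,T)<\mu_m(r)\le|\Delta_m(s)|.
\]
In particular $\Delta_m^{\delta,L,T}$ has no zeros on $\Gamma_r$, since $|\Delta_m^{\delta,L,T}|\ge|\Delta_m|-|g|>0$ there, so the integral \eqref{eq:noisy-tau-def} is well defined. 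Rouch\'e's theorem then says $\Delta_m$ and $\Delta_m^{\delta,L,T}=\Delta_m+g$ have the same number of zeros in $|s|<r$, counted with multiplicity.

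If one prefers a self-contained argument to citing Rouch\'e, use the homotopy $\Delta_m+tg$, $t\in[0,1]$. It never vanishes on $\Gamma_r$, and the integer-valued count $\frac1{2\pi\iu}\oint(\Delta_m+tg)'/(\Delta_m+tg)\,ds$ depends continuously on $t$, so it is constant.

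\textbf{Step 3: identify the counts.} By the argument principle, $\tau_m(r)$ and $\tau_m^{\delta,L,T}(r)$ equal the respective zero counts inside $\Gamma_r$, which gives \eqref{eq:noisy-count-equality}.

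\textbf{Main obstacle.} I expect the main care to be needed in Step 1, namely making sure $\Delta_m$ has no poles inside $\overline{D_r}$, so that the argument principle counts zeros only. Restricting $r$ below the modulus of the first zero of $I_n(\sqrt s)$ for $n\le 2m-2$ handles this. The inequality step itself is immediate from Lemma~\ref{lem:moment-to-determinant}.
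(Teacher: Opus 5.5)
Your proposal is correct and follows the paper's proof. Lemma~\ref{lem:moment-to-determinant} together with \eqref{eq:noise-rouche-condition} gives \(|\Delta_m^{\delta,L,T}-\Delta_m|<\mu_m(r)\le|\Delta_m|\) on \(\Gamma_r\), and Rouch\'e's theorem plus the argument principle then give \eqref{eq:noisy-count-equality}; your Step~1 (holomorphy of \(\Delta_m\) on \(|s|\le r\) for \(r\) below the first pole on the negative real axis, and entireness of the truncated-data determinant) and your check that \(\Delta_m^{\delta,L,T}\) has no zeros on \(\Gamma_r\) make explicit hypotheses that the paper leaves implicit.
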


\begin{proof}
By Lemma~\ref{lem:moment-to-determinant}, for all \(s\in\Gamma_r\),
\begin{align}
\left|
\Delta_m^{\delta,L,T}(s)-\Delta_m(s)
\right|
\leq
m^2 B_m(r)^{m-1}\eta_m(r,\delta,L,T).
\end{align}
Condition \eqref{eq:noise-rouche-condition} implies
\begin{align}
\left|
\Delta_m^{\delta,L,T}(s)-\Delta_m(s)
\right|
<
\mu_m(r)
\leq
|\Delta_m(s)|,
\qquad
s\in\Gamma_r.
\end{align}
Therefore, by Rouch\'e's theorem, \(\Delta_m^{\delta,L,T}\) and \(\Delta_m\) have the same number of zeros inside \(\Gamma_r\), counting multiplicities. The equality \eqref{eq:noisy-count-equality} follows from the argument principle.
\end{proof}

\begin{remark}
The sufficient condition \eqref{eq:noise-rouche-condition} has a direct interpretation. The quantity
$$
\mu_m(r)
=
\min_{s\in\Gamma_r}|\Delta_m(s)|
$$
is the Rouch\'e margin of the ideal determinant characteristic on the contour, whereas
$$
m^2 B_m(r)^{m-1}\eta_m(r,\delta,L,T)
$$
is an upper bound for the determinant perturbation caused by measurement noise, boundary discretization, and time truncation. The zero count is stable whenever the perturbation is smaller than this margin.
\end{remark}

\begin{corollary}[Stable source-number estimation]
\label{cor:stable-source-number}
Suppose that the ideal determinant counts satisfy
\begin{align}
\label{eq:ideal-count-pattern}
\tau_m(r)
=
\begin{cases}
0, & m\leq N,\\
m-N, & m>N,
\end{cases}
\end{align}
for \(m=1,\ldots,m_{\max}\). Assume that the Rouch\'e condition
\eqref{eq:noise-rouche-condition} holds for all \(m=1,\ldots,m_{\max}\). Then
\begin{align}
\label{eq:noisy-count-pattern}
\tau_m^{\delta,L,T}(r)=\tau_m(r),
\qquad
m=1,\ldots,m_{\max}.
\end{align}
Consequently, the source-number estimator
\begin{align}
\label{eq:noisy-N-estimator}
\widehat N^\delta
:=
\min\{m:\tau_m^{\delta,L,T}(r)>0\}-1
\end{align}
recovers the true source number:
\begin{align}
\label{eq:stable-N-result}
\widehat N^\delta=N.
\end{align}
\end{corollary}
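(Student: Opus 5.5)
The corollary follows by applying Theorem~\ref{thm:noise-stable-count} separately to each Hankel order and then reading off the first positive count.

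Fix \(m\in\{1,\ldots,m_{\max}\}\). The ideal counts \(\tau_m(r)\) in \eqref{eq:ideal-count-pattern} are assumed to be defined. Implicitly this means \(\Delta_m\) has no zeros on \(\Gamma_r\), so \(\mu_m(r)>0\) for every such \(m\), and the Rouch\'e condition \eqref{eq:noise-rouche-condition} is meaningful. Since \eqref{eq:noise-rouche-condition} holds at this \(m\), Theorem~\ref{thm:noise-stable-count} shows that \(\Delta_m^{\delta,L,T}\) and \(\Delta_m\) have the same number of zeros inside \(\Gamma_r\), counted with multiplicity.

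We also need \(\tau_m^{\delta,L,T}(r)\) in \eqref{eq:noisy-tau-def} to be well defined. On \(\Gamma_r\) the strict inequality
\[
|\Delta_m^{\delta,L,T}(s)-\Delta_m(s)|<|\Delta_m(s)|
\]
excludes zeros of \(\Delta_m^{\delta,L,T}\) there, by the triangle inequality. Analyticity of \(\Delta_m^{\delta,L,T}\) inside \(\Gamma_r\) is taken from the standing assumptions of Theorem~\ref{thm:noise-stable-count}: for the truncated noisy Laplace transforms it is automatic, since they are entire in \(s\). The argument principle then gives \(\tau_m^{\delta,L,T}(r)=\tau_m(r)\). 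As \(m\) was arbitrary, this proves \eqref{eq:noisy-count-pattern}.

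For the estimator, substitute \eqref{eq:ideal-count-pattern} into \eqref{eq:noisy-count-pattern}. This gives \(\tau_m^{\delta,L,T}(r)=0\) for \(m\le N\) and \(\tau_m^{\delta,L,T}(r)=m-N\ge 1\) for \(N<m\le m_{\max}\).

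Here an implicit hypothesis surfaces: the set in \eqref{eq:noisy-N-estimator} must be nonempty within the computed range, so we need \(m_{\max}\ge N+1\). We state this explicitly; otherwise the ideal pattern itself never exhibits a positive count.

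With \(m_{\max}\ge N+1\), the minimum of \(\{m\le m_{\max}:\tau_m^{\delta,L,T}(r)>0\}\) equals \(N+1\). Hence \(\widehat N^\delta=N\), which is \eqref{eq:stable-N-result}.

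There is no genuine analytic obstacle; the proof is bookkeeping. The only delicate points are the implicit hypotheses: nonvanishing of \(\Delta_m\) and \(\Delta_m^{\delta,L,T}\) on \(\Gamma_r\), and \(m_{\max}\ge N+1\). I would state both explicitly in the write-up.
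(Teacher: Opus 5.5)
Your proposal is correct and follows the same argument the paper intends; the paper gives no separate proof, since the corollary is immediate from Theorem~\ref{thm:noise-stable-count} applied at each $m=1,\ldots,m_{\max}$, after which one reads off that the first positive count occurs at $m=N+1$. Your added hypothesis $m_{\max}\ge N+1$ is a genuine requirement the paper leaves implicit, as is your check that $\Delta_m^{\delta,L,T}$ has no zeros on $\Gamma_r$, and stating both explicitly is right.
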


\section{Numerical Experiments}
\label{sec:numerical}

In this section, we present numerical experiments to validate the proposed
Hankel determinant contour-counting method. The experiments are designed to
examine four aspects: the noiseless source-counting rule, robustness with
respect to the contour radius, sensitivity to noise and near-degenerate source
configurations, and the recovery of source locations and strengths after the
source number has been identified.

All numerical experiments are carried out in the unit disk. The boundary is
sampled uniformly, and the discrete Fourier moments, Hankel determinants, and
contour counts are computed according to the definitions in
Section~\ref{sec:hankel-counting}. Unless otherwise stated, the contour is the
circle
\[
\Gamma_r=\{s\in\mathbb C: |s|=r\},
\]
and the contour integral is evaluated by a uniform trapezoidal rule along
\(\Gamma_r\). The winding number is computed from the total continuous change
of the argument of the determinant curve. A contour count is regarded as
reliable only when the determinant curve stays away from the origin during the
winding-number computation.

In the noiseless experiments, the normalized boundary flux
\(\mathcal F(\theta,s)=s\widehat f(\theta,s)\) is evaluated from the
Fourier--Bessel representation of the Dirichlet Green function in the disk.
In the noisy transformed-data experiment, independent complex Gaussian noise is
added directly to the sampled values of \(\mathcal F(\theta_\ell,s_k)\). More
precisely, for a prescribed relative noise level \(\sigma\), we use
\[
\mathcal F^\delta(\theta_\ell,s_k)
=
\mathcal F(\theta_\ell,s_k)
+
\sigma\,\|\mathcal F(\cdot,s_k)\|_{\rm rms}\,\xi_{\ell k},
\]
where \(\xi_{\ell k}\) are independent standard complex Gaussian random
variables and \(\|\mathcal F(\cdot,s_k)\|_{\rm rms}\) denotes the root-mean-square
amplitude over the boundary samples. This noise model is used only as a stress
test for the determinant count, since it perturbs different complex frequencies
independently.

For each Monte Carlo experiment, the source configuration and the contour are
fixed, while the noise realization is resampled independently. The reported
success rate is the fraction of trials for which the estimated source number
\[
\widehat N=\min\{m:\tau_m^{(L)}(r)>0\}-1
\]
coincides with the true source number. For the reconstruction experiments, once
the source number is fixed, the locations and strengths are recovered from the
low-frequency moments by the annihilating-polynomial and Vandermonde procedure
described in Section~\ref{sec:hankel-counting}.


\subsection{Noiseless source counting and contour-radius robustness}
\label{subsec:noiseless-counting}

We first test the proposed source-counting rule in the noiseless case. The aim
is to verify the predicted count pattern
\[
\tau_m(r)
=
\begin{cases}
0, & m\leq N,\\
m-N, & m>N.
\end{cases}
\]
We consider source configurations with \(N=1,\ldots,5\). The contour radius is
first fixed at \(r=0.10\), and the Hankel order is varied from \(m=1\) to
\(m=8\).

Figure~\ref{fig:count-pattern} shows the resulting count sequences. In all
cases, the first nonzero count occurs exactly at \(m=N+1\). The observed
sequences are
\[
\begin{aligned}
N=1:\quad &(0,1,2,3,4,5,6,7),\\
N=2:\quad &(0,0,1,2,3,4,5,6),\\
N=3:\quad &(0,0,0,1,2,3,4,5),\\
N=4:\quad &(0,0,0,0,1,2,3,4),\\
N=5:\quad &(0,0,0,0,0,1,2,3).
\end{aligned}
\]
Therefore the estimator
\[
\widehat N=\min\{m:\tau_m(r)>0\}-1
\]
recovers the true source number for all tested configurations.

\begin{figure}[htbp]
\centering
\includegraphics[width=0.72\textwidth]{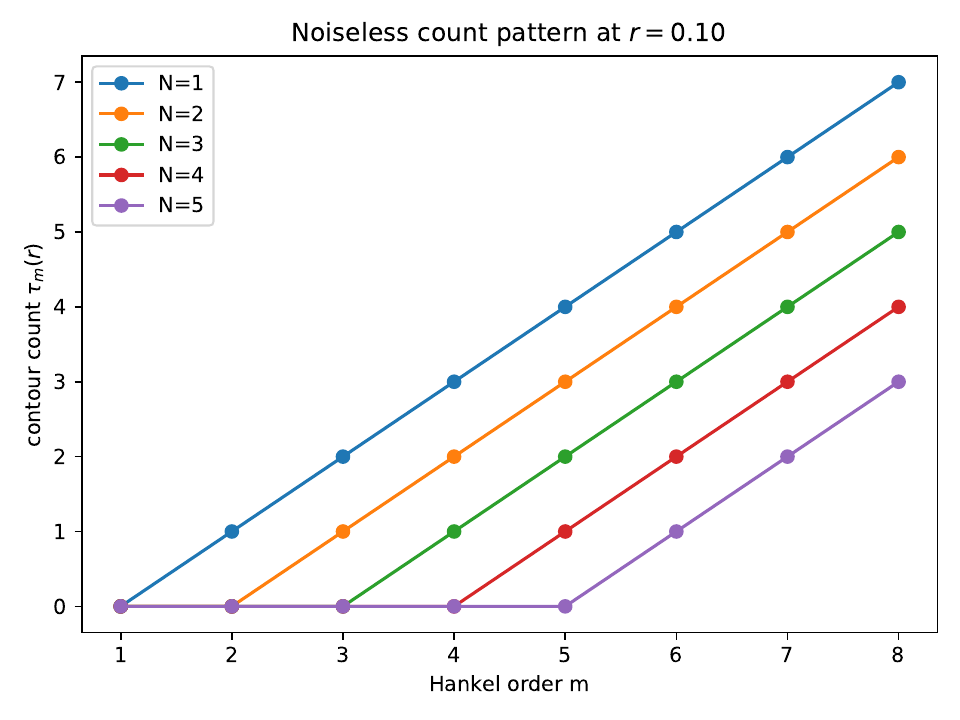}
\caption{Noiseless contour counts \(\tau_m(r)\) for \(N=1,\ldots,5\) at
\(r=0.10\). The first nonzero count occurs at \(m=N+1\).}
\label{fig:count-pattern}
\end{figure}

We next test the robustness of the contour count with respect to the radius
\(r\). For the case \(N=5\), the radius is varied over
\[
r=0.02,\ 0.05,\ 0.10,\ 0.20,\ 0.50.
\]
As shown in Figure~\ref{fig:radius-robustness}, the count sequence remains
\[
(\tau_1,\ldots,\tau_8)=(0,0,0,0,0,1,2,3)
\]
for all tested radii. Thus the source-counting rule is not tied to a single
particular contour radius. This supports the contour-validation strategy used
in the numerical implementation.

\begin{figure}[htbp]
\centering
\includegraphics[width=0.72\textwidth]{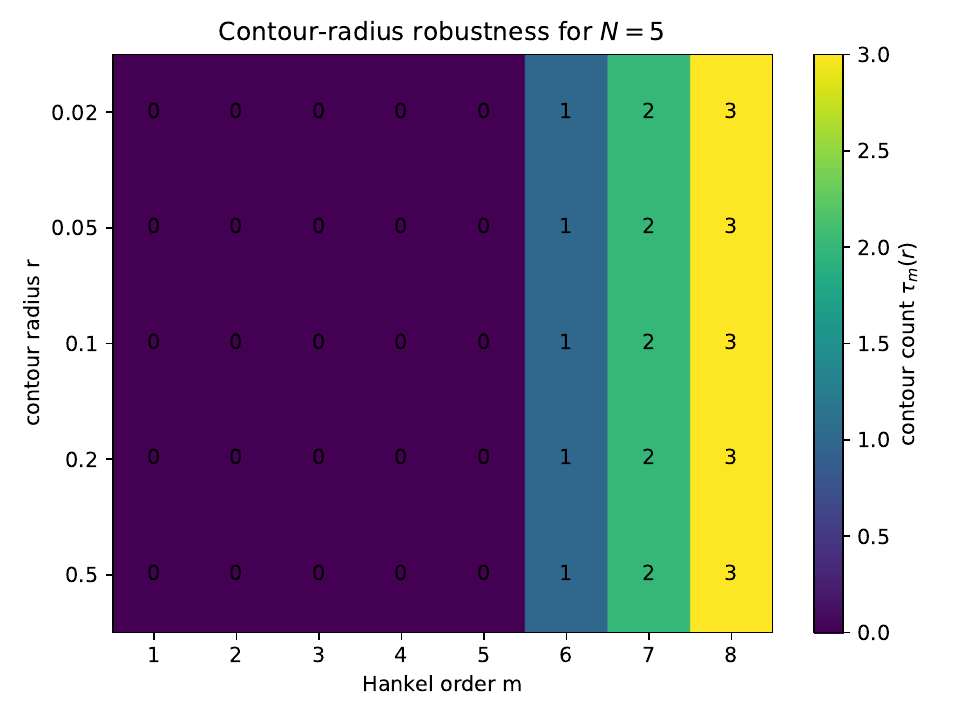}
\caption{Contour-radius robustness for \(N=5\). Each entry shows the contour count
\(\tau_m(r)\). The identical rows confirm that the source-counting pattern is
stable with respect to the contour radius.}
\label{fig:radius-robustness}
\end{figure}

Finally, Figure~\ref{fig:rouche-margin-first} reports the determinant margin
at the first nonzero determinant order,
\[
\mu_{N+1}(r)
=
\min_{s\in\Gamma_r}|\Delta_{N+1}(s)|.
\]
For each fixed \(N\), the margin increases mildly as \(r\) increases. However,
the margin decreases rapidly as the source number grows. For instance, at
\(r=0.10\), the values of \(\mu_{N+1}(r)\) are approximately
\[
2.18\times10^{-5},\quad
1.05\times10^{-7},\quad
1.66\times10^{-10},\quad
7.17\times10^{-14},\quad
1.10\times10^{-18}
\]
for \(N=1,\ldots,5\), respectively. This rapid decrease is consistent with the
increasing ill-conditioning of higher-order Hankel determinants and explains
why noisy source counting becomes more difficult as \(N\) increases.

\begin{figure}[htbp]
\centering
\includegraphics[width=0.72\textwidth]{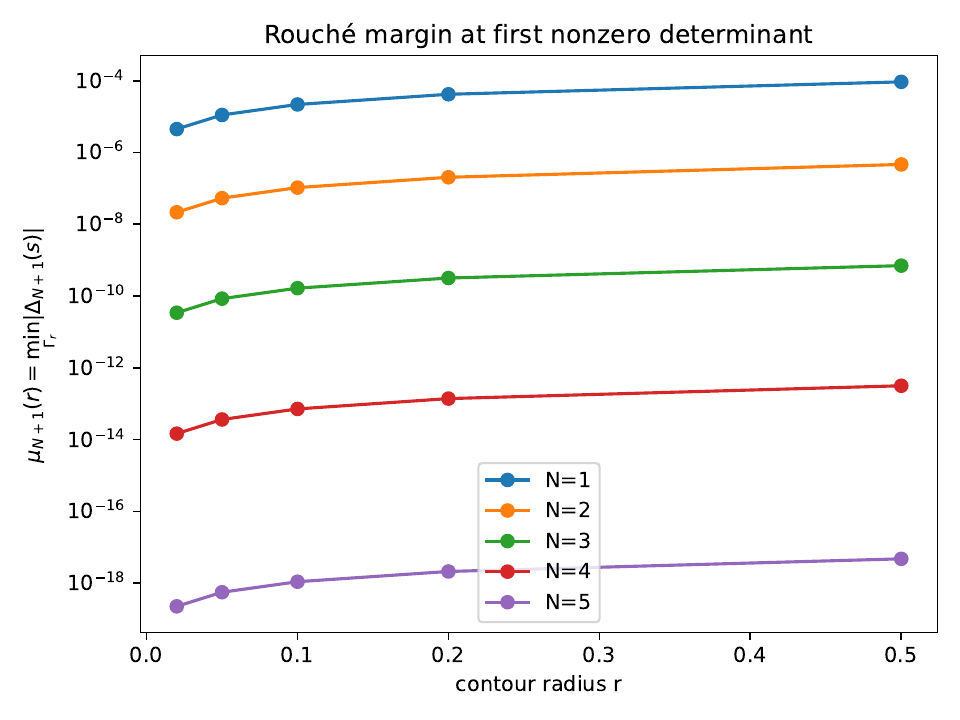}
\caption{Rouch\'e margin
\(\mu_{N+1}(r)=\min_{\Gamma_r}|\Delta_{N+1}(s)|\) at the first nonzero
determinant order. The margin decreases rapidly as \(N\) increases.}
\label{fig:rouche-margin-first}
\end{figure}

\subsection{Robustness under transformed-data noise}
\label{subsec:noise-robustness}

We next examine the sensitivity of the source-counting procedure to noisy data.
For each fixed source configuration and each noise level, 100 Monte Carlo
trials are performed. In each trial, independent complex Gaussian perturbations
are added to the sampled normalized boundary data \(\mathcal F(\theta_\ell,s_k)\).
The determinant contour-counting procedure is then applied to estimate the
source number. The success rate is defined as the percentage of trials for
which \(\widehat N=N\).

The noiseless case again gives a success rate of \(100\%\) for
\(N=1,2,3,4\). Under transformed-data noise, however, the success rate decreases.
The histograms in Figure~\ref{fig:noise-histograms} show that the dominant
failure mode is overestimation of the source number. For example, when the true
source number is \(N=2\), noisy trials frequently produce estimates
\(\widehat N=3,4,5\), or \(6\). A similar behavior is observed for \(N=3\).
For \(N=4\), underestimation also appears at higher noise levels.

Figure~\ref{fig:noise-success-rate} summarizes the Monte Carlo success rates.
The performance is strongest for small source numbers and deteriorates as both
the noise level and the source number increase. This behavior is consistent with
the Rouch\'e stability condition: the zero count is stable only if the
determinant perturbation remains smaller than the ideal determinant margin on
the contour. Since the determinant margin decreases rapidly with \(N\), the
higher-order tests are more sensitive to perturbations.

\begin{figure}[htbp]
\centering
\includegraphics[width=0.48\textwidth]{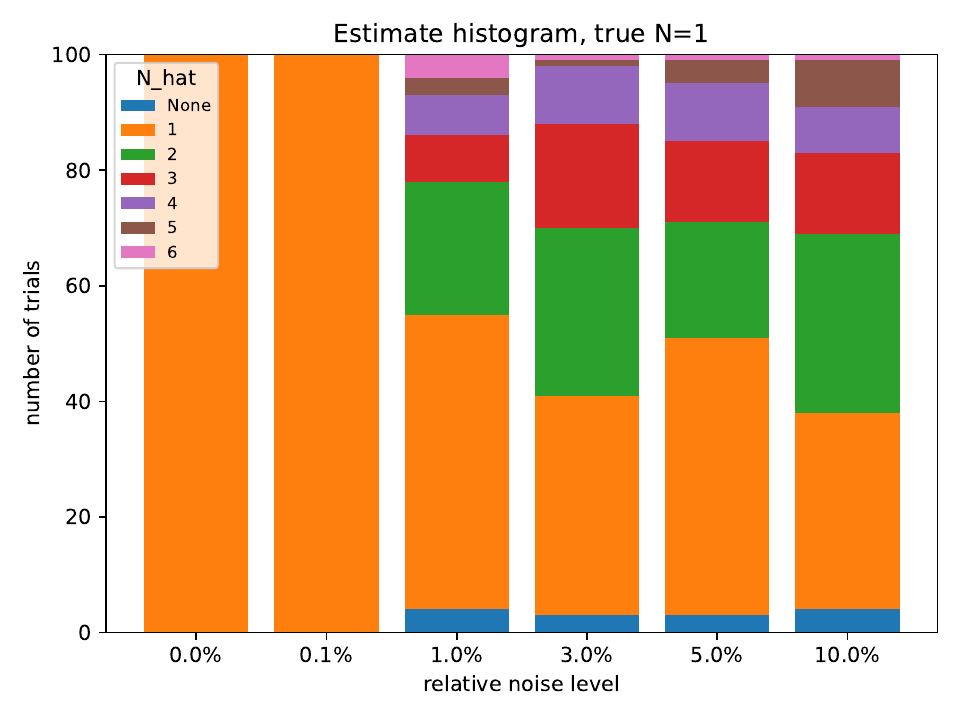}
\includegraphics[width=0.48\textwidth]{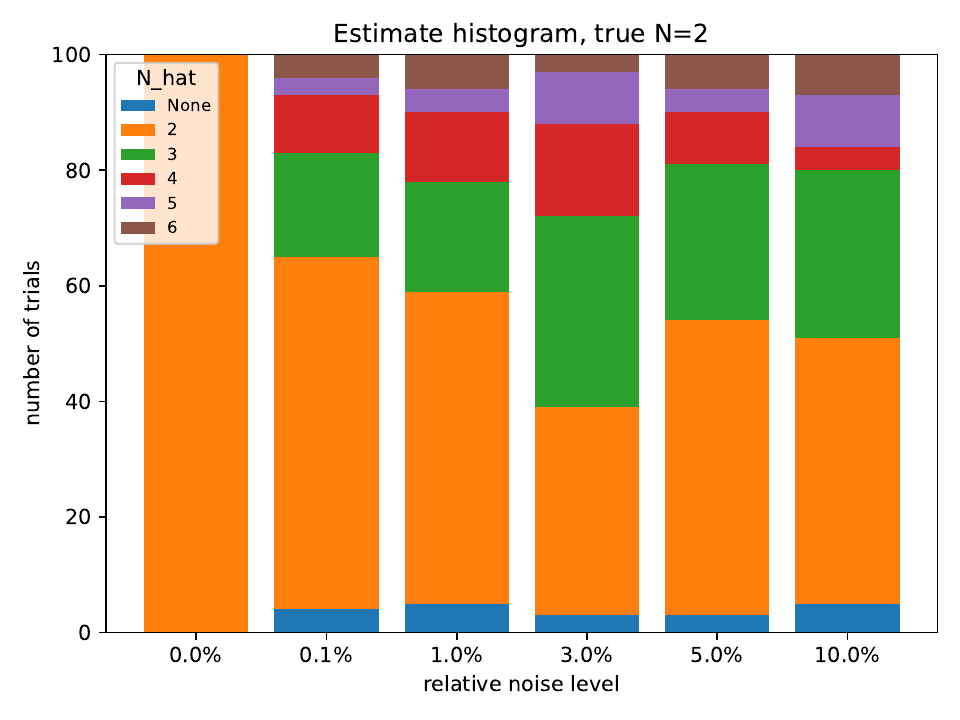}

\includegraphics[width=0.48\textwidth]{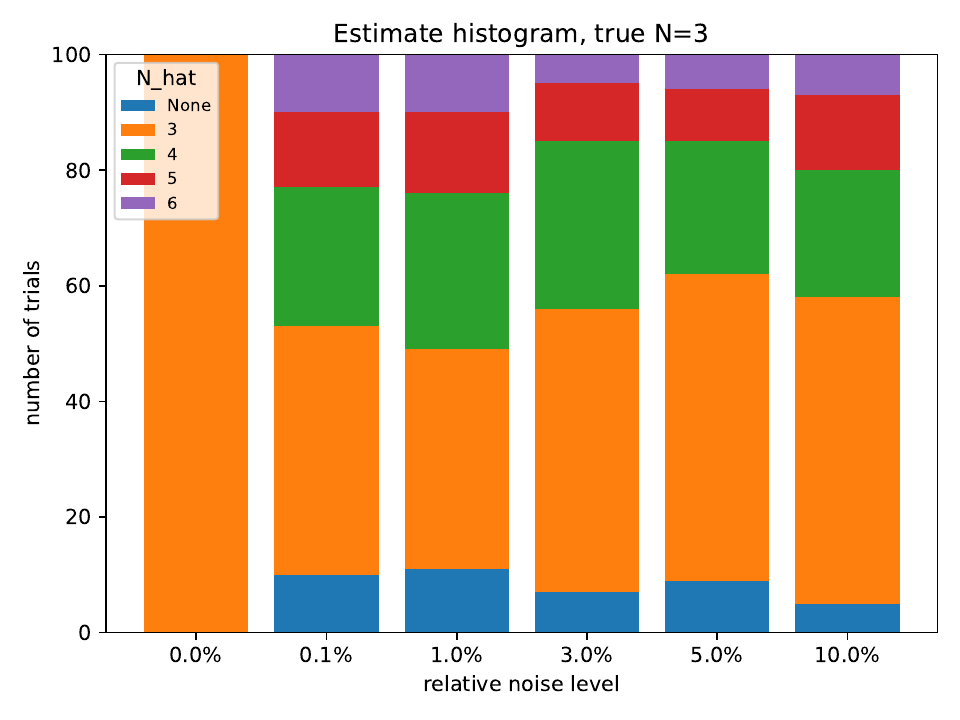}
\includegraphics[width=0.48\textwidth]{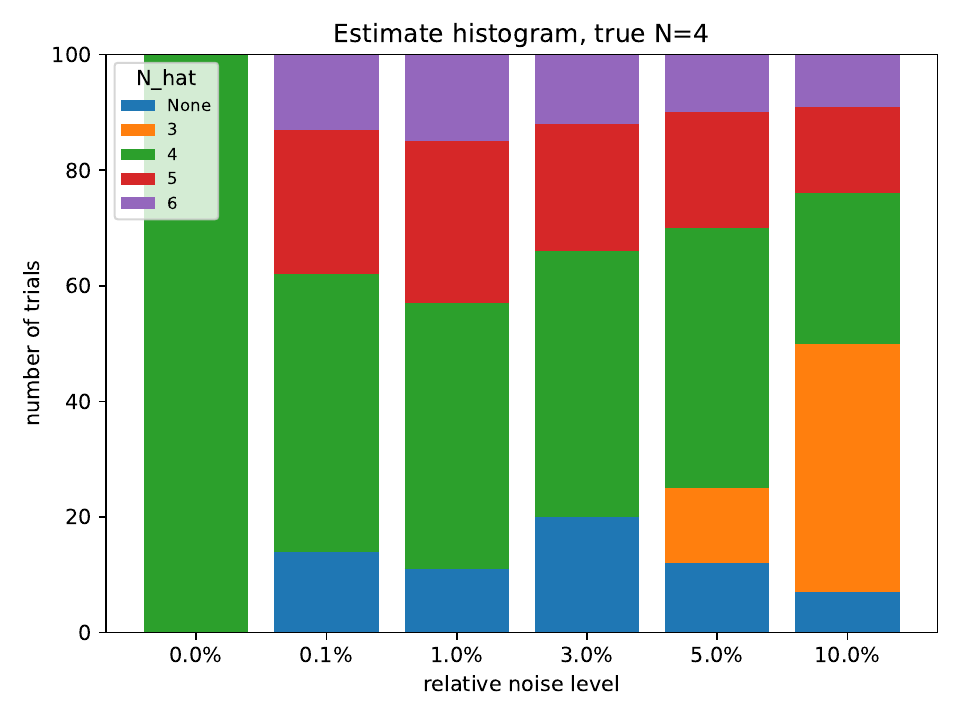}
\caption{Monte Carlo histograms of the estimated source number under noisy
transformed data for \(N=1,2,3,4\).}
\label{fig:noise-histograms}
\end{figure}

\begin{figure}[htbp]
\centering
\includegraphics[width=0.72\textwidth]{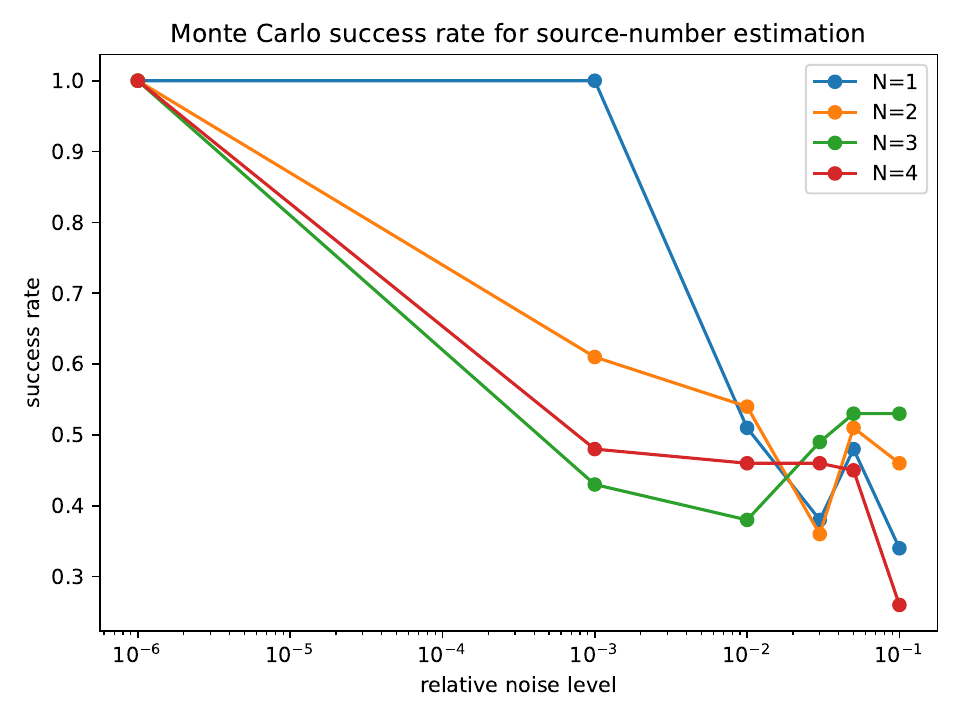}
\caption{Monte Carlo success rates for source-number estimation under noisy
transformed data. The success rate decreases as the noise level and the source
number increase.}
\label{fig:noise-success-rate}
\end{figure}

It is important to interpret this experiment as a stress test. The perturbations
are added independently to each complex transformed sample
\((\theta_\ell,s_k)\), which destroys part of the analytic correlation in the
Laplace variable \(s\). In a physical measurement model, noise is naturally
added in the time domain and then propagated through the Laplace transform.
Nevertheless, the present experiment is useful because it illustrates the role
of the determinant margin in the Rouch\'e condition.

\subsection{Effect of source separation and source strength}
\label{subsec:resolution-limits}

We now study how the determinant margin changes when the admissibility
conditions become nearly violated. Two situations are considered: nearly
colliding sources and weak sources. In both tests, the noiseless determinant
count remains correct. The main effect is instead observed in the Rouch\'e
margin
\[
\mu_{N+1}(r)
=
\min_{s\in\Gamma_r}|\Delta_{N+1}(s)|.
\]

In the close-source experiment, the true source number is \(N=2\). The distance
between two sources is varied from \(0.400\) to \(0.015\). For all tested
separations, the contour-count sequence remains
\[
(\tau_1,\ldots,\tau_7)=(0,0,1,2,3,4,5),
\]
and hence the method gives \(\widehat N=2\). However, the determinant margin
decreases rapidly as the two sources approach each other. More precisely,
\(\mu_3(r)\) decreases from approximately
\[
2.74\times10^{-8}
\]
at \(|p_1-p_2|=0.400\) to
\[
1.14\times10^{-12}
\]
at \(|p_1-p_2|=0.015\). Thus close sources do not destroy the noiseless count,
but they significantly reduce the admissible perturbation level.

The weak-source experiment shows a similar phenomenon. Here the true source
number is \(N=3\), and the third source strength \(q_3\) is varied from \(1\) to
\(0.01\). For all tested values of \(q_3\), the count sequence remains
\[
(\tau_1,\ldots,\tau_8)=(0,0,0,1,2,3,4,5),
\]
so that the method correctly returns \(\widehat N=3\). Nevertheless, the
margin \(\mu_4(r)\) decreases from approximately
\[
3.99\times10^{-10}
\]
when \(q_3=1\) to
\[
2.42\times10^{-12}
\]
when \(q_3=0.01\). Thus weak sources are detectable in exact noiseless moment
data, but their detection becomes increasingly sensitive to perturbations.

\begin{figure}[htbp]
\centering
\includegraphics[width=0.48\textwidth]{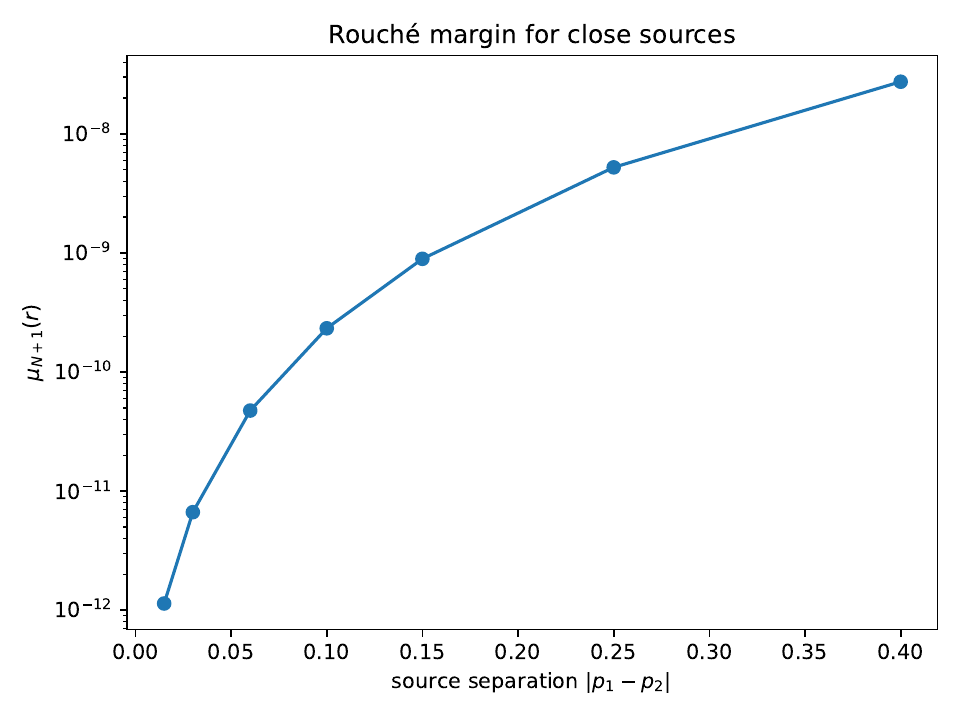}
\includegraphics[width=0.48\textwidth]{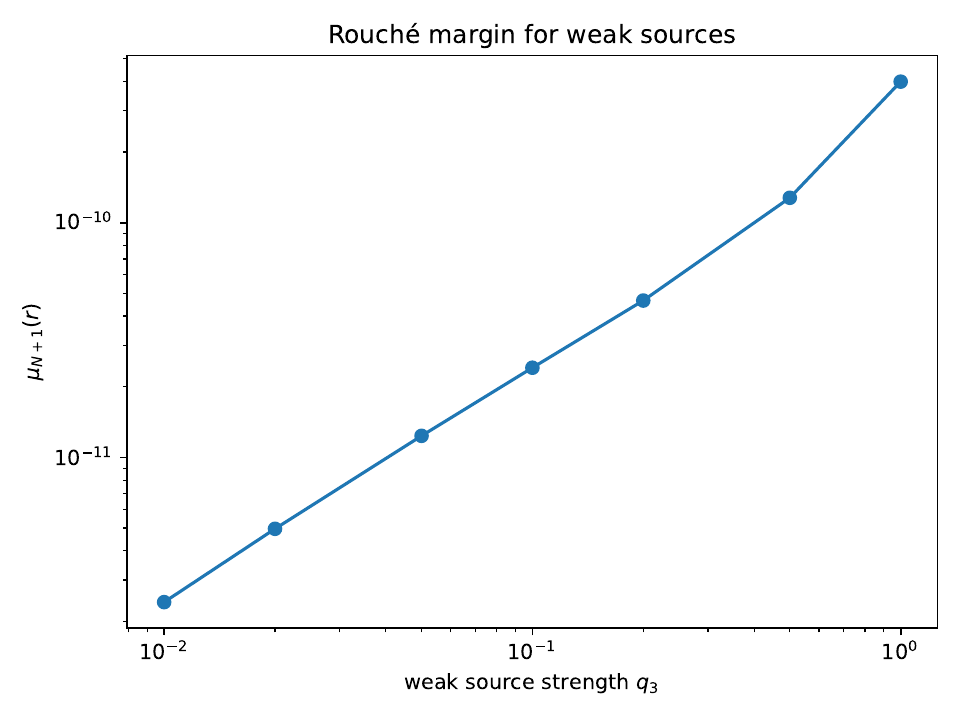}
\caption{Rouch\'e margins for close sources and weak sources. The margin
decreases as the source separation becomes smaller or as one source strength
tends to zero.}
\label{fig:resolution-margin}
\end{figure}

These experiments support the role of the admissibility assumptions. The source
separation condition prevents the determinant margin from becoming too small
due to nearly colliding exponential nodes, while the lower bound on source
strengths prevents weak sources from being masked by perturbations.

\subsection{Recovery of source locations and strengths}
\label{subsec:location-strength-recovery}


Finally, we test the recovery of source locations and strengths after the
source number has been identified. In this experiment, the true source number
\(N\) is used in the Prony--Vandermonde reconstruction step. By the finite
exponential-sum representation \eqref{eq:finite-sum} with the parametrization
\eqref{eq:lambda-weight}, the low-frequency moments determine the exponential
nodes \(\lambda_j\) and weights \(w_j\). The source locations are recovered
from the roots of the annihilating polynomial, and the strengths are then
obtained by solving the corresponding Vandermonde system.

The reconstruction errors are measured by
\[
E_p
=
\min_{\pi}
\left(
\frac1N
\sum_{j=1}^{N}
|\widehat p_j-p_{\pi(j)}|^2
\right)^{1/2},
\]
and
\[
E_q
=
\min_{\pi}
\frac{
\left(
\sum_{j=1}^{N}
|\widehat q_j-q_{\pi(j)}|^2
\right)^{1/2}
}{
\left(
\sum_{j=1}^{N}|q_j|^2
\right)^{1/2}
},
\]
where \(\pi\) ranges over all permutations of \(\{1,\ldots,N\}\).

Figures~\ref{fig:reconstruction-errors} reports the mean errors for
\(N=1,2,3,4\) under relative perturbations of the low-frequency moment
sequence. In the noiseless case, the reconstruction reaches machine precision.
For instance, when \(N=4\), the noiseless mean errors are approximately
\[
E_p=3.24\times10^{-16},
\qquad
E_q=1.30\times10^{-15}.
\]
As the moment noise level increases, both \(E_p\) and \(E_q\) increase nearly
linearly on the log--log scale. This indicates conditional stability of the
Prony--Vandermonde reconstruction for separated nodes and sufficiently small
moment perturbations.

The strength recovery is generally more sensitive than the location recovery.
For \(N=4\), the mean errors are approximately
\[
E_p=1.74\times10^{-5},
\qquad
E_q=1.44\times10^{-4}
\]
at relative moment noise \(10^{-6}\), while at relative moment noise \(10^{-2}\)
they increase to
\[
E_p=1.25\times10^{-1},
\qquad
E_q=8.03\times10^{-1}.
\]
This stronger sensitivity of the strength reconstruction is caused by the
conditioning of the Vandermonde system.

\begin{figure}[htbp]
\centering
\includegraphics[width=0.48\textwidth]{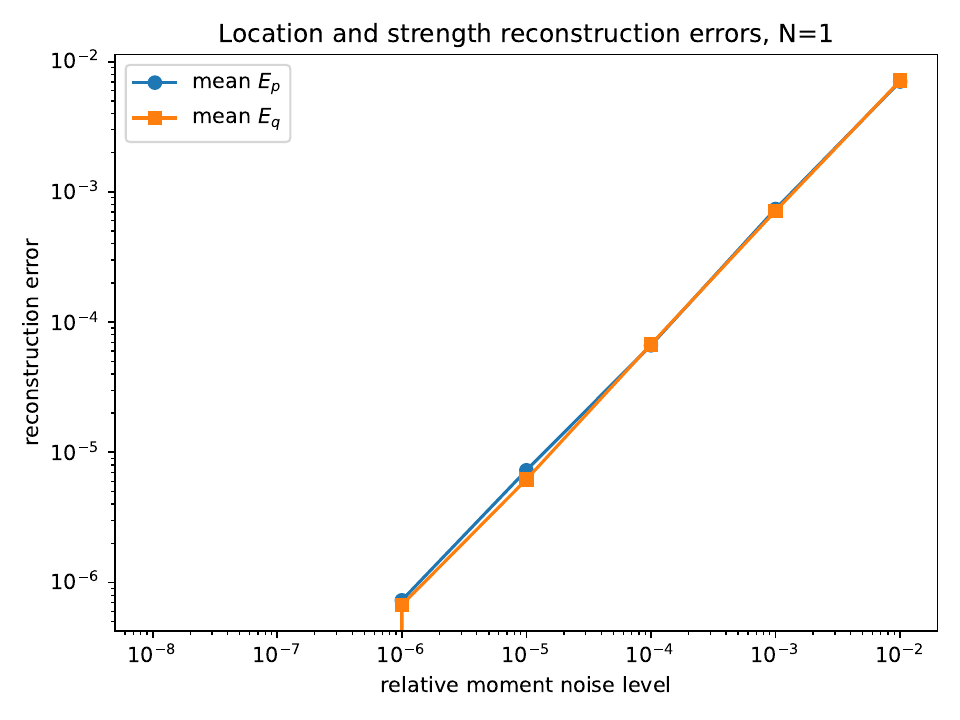}
\includegraphics[width=0.48\textwidth]{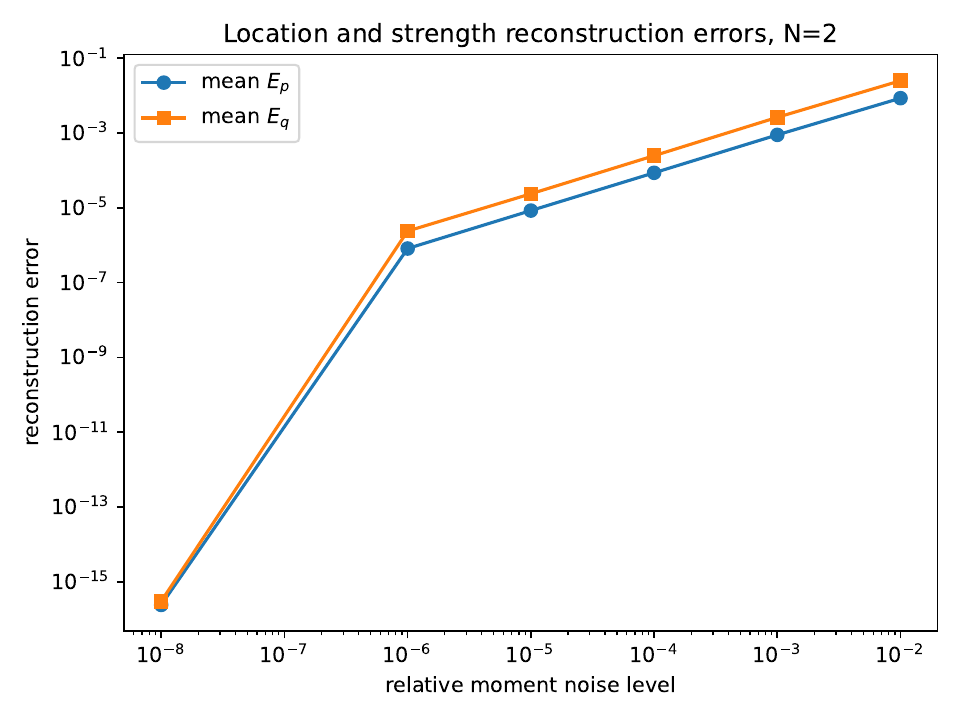}

\includegraphics[width=0.48\textwidth]{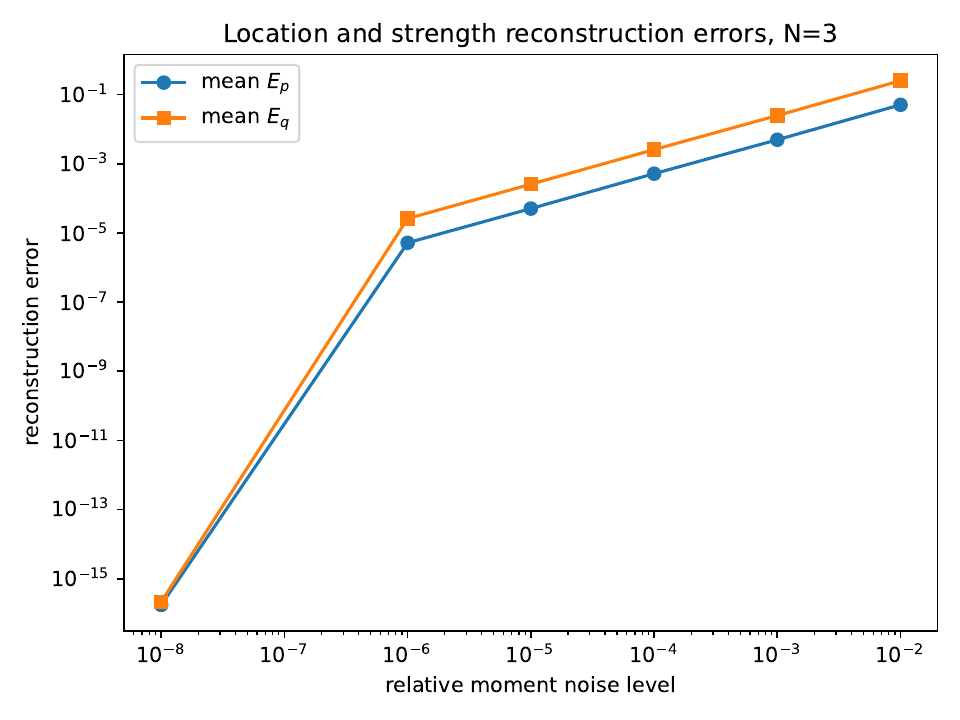}
\includegraphics[width=0.48\textwidth]{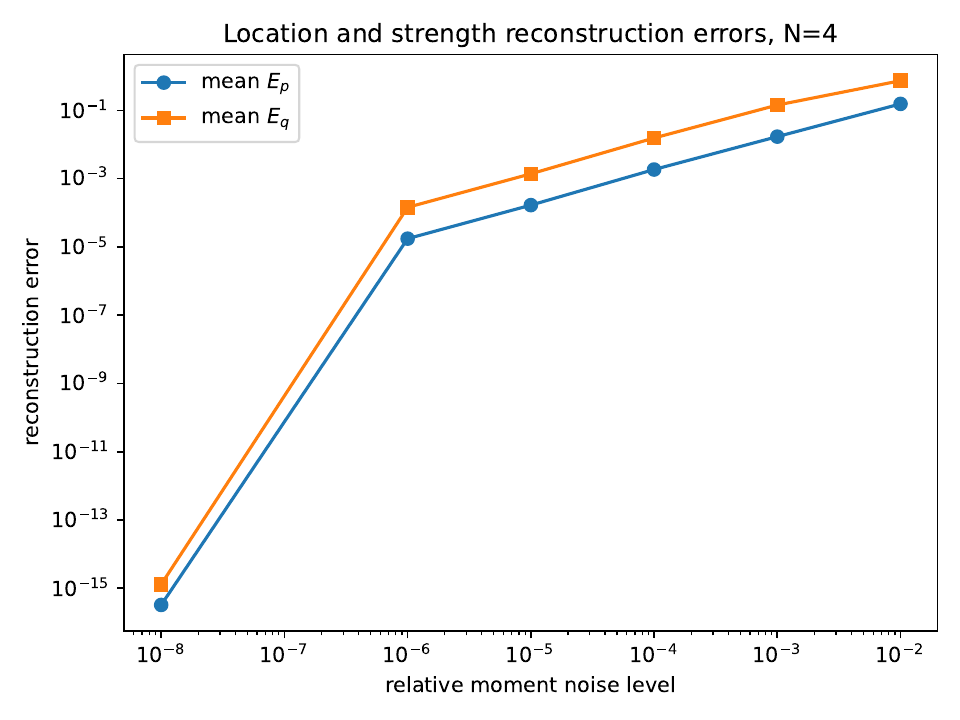}
\caption{Location and strength reconstruction errors for
\(N=1,2,3,4\). The errors increase with the relative perturbation level of the
low-frequency moments.}
\label{fig:reconstruction-errors}
\end{figure}

Figure~\ref{fig:vandermonde-conditioning} shows that the condition number of
the Vandermonde matrix increases with the source number:
\[
\kappa(V)\approx
1.00,\quad 3.06,\quad 8.83,\quad 31.99
\]
for \(N=1,2,3,4\), respectively. Therefore the post-processing step becomes
more sensitive as \(N\) increases. This is consistent with the classical
instability of Prony-type reconstruction and explains the growth of the
strength error for larger source numbers.

\begin{figure}[htbp]
\centering
\includegraphics[width=0.62\textwidth]{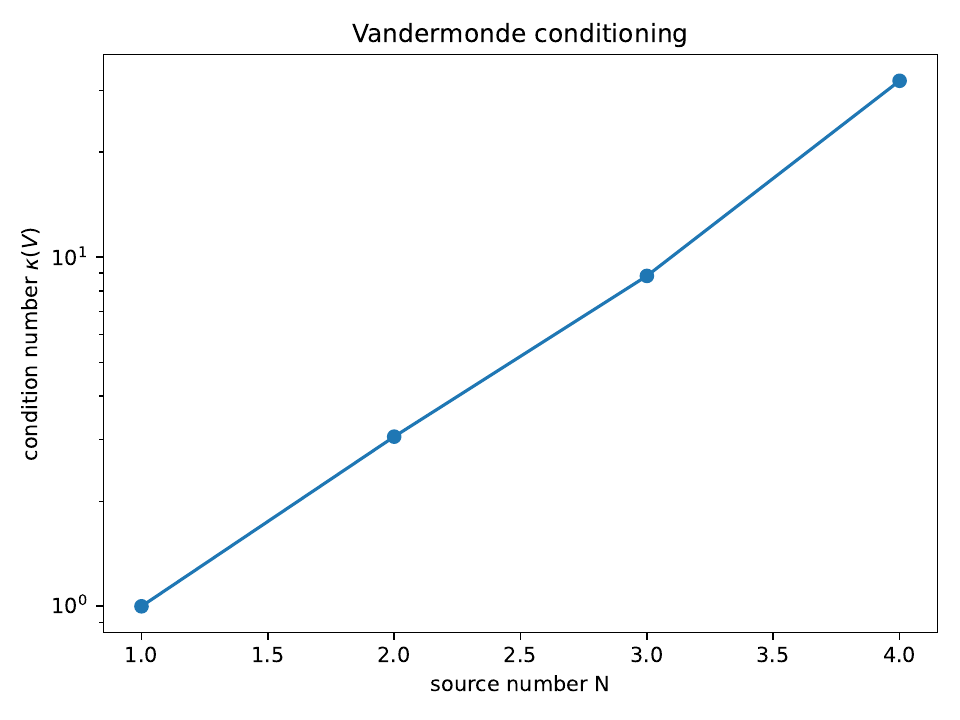}
\caption{Condition number of the Vandermonde matrix used in the recovery of
the source strengths. The conditioning deteriorates as \(N\) increases.}
\label{fig:vandermonde-conditioning}
\end{figure}

Overall, the numerical experiments confirm the main features of the proposed
framework. The Hankel determinant contour count exactly identifies the source
number in the noiseless case and is stable over a range of contour radii. The
Rouch\'e margin explains the sensitivity to noise, close sources, and weak
sources. Once the source number is determined, the same low-frequency moment
sequence can be used to recover the source locations and strengths through a
Prony--Vandermonde procedure, whose stability is governed by the conditioning
of the associated Vandermonde system.

\section{Conclusion}
\label{sec:conclusion}

This paper proposed a Hankel determinant approach for identifying an unknown
number of stationary point sources in a two-dimensional heat equation from
boundary flux data. In the unit disk, the Laplace-transformed and normalized
boundary flux admits a Fourier moment representation. The low-frequency limit
of this moment sequence has a finite exponential-sum structure, from which a
family of Hankel matrices and determinant characteristics can be constructed.

The main theoretical result shows that the number of point sources is encoded
in the vanishing order of the Hankel determinant characteristic at the zero
Laplace frequency. Under the distinctness of source locations, nonvanishing
source strengths, and a generic determinant lifting condition, the determinant
order satisfies
\[
\operatorname{ord}_{s=0}\Delta_m(s)
=
0
\quad (m\leq N),
\qquad
\operatorname{ord}_{s=0}\Delta_m(s)
=
m-N
\quad (m>N).
\]
Consequently, the source number is determined by the first Hankel order at
which the determinant contour count becomes nonzero. This gives a computable
source-counting formula through the argument principle.

We also established a Rouch\'e-type stability result for the determinant count.
The perturbation analysis shows how measurement noise, boundary discretization,
and finite-time truncation propagate from the boundary data to the Fourier
moments and then to the Hankel determinant. The resulting sufficient condition
has a clear interpretation: the determinant count is stable when the
determinant perturbation remains smaller than the Rouch\'e margin on the
chosen contour. This explains the sensitivity of the method in near-degenerate
situations, such as nearly colliding sources or very weak source strengths.

After the source number is identified, the same low-frequency moment sequence
can be used to recover the source locations and strengths. The locations are
obtained from an annihilating polynomial, and the strengths are then recovered
from a Vandermonde system. The numerical experiments confirm the predicted
count pattern in the noiseless case, show robustness with respect to the
contour radius, illustrate the decay of the Rouch\'e margin for close and weak
sources, and validate the subsequent location-strength recovery. They also
show that the strength recovery is more sensitive than the location recovery,
which is consistent with the conditioning of the Vandermonde system.

The present work focuses on the unit disk, where the Fourier moment
representation is explicit and the determinant structure can be analyzed
cleanly. Extending the construction to more general domains, developing
sharper non-degeneracy conditions for the determinant lifting mechanism, and
designing regularized moment estimators for strongly noisy data are natural
directions for future work.


%
%
%

\bibliographystyle{plain}   %
\bibliography{myref}       %

\end{document}